\documentclass[11pt]{article}

\usepackage{graphicx}
\usepackage{url}
\usepackage{amsmath,amsfonts,amssymb,amsthm,mathtools,euscript,mathrsfs}
\usepackage{verbatim}
\usepackage{color}
\usepackage{float}
\usepackage[usenames,dvipsnames]{xcolor}
\usepackage{mdwlist,multicol}
\usepackage{caption,subcaption}
\usepackage{setspace}
\usepackage{bbm}
\usepackage{epstopdf}
\usepackage{cite}
\usepackage[shortlabels]{enumitem}
\usepackage{authblk}

\usepackage[top=1in, bottom=1in, left=1in, right=1in]{geometry}

\usepackage{hyperref}
\hypersetup{
    colorlinks=true,
    linkcolor=black,
    filecolor=magenta,      
    urlcolor=blue,
}

\newcommand{\rmd}{\mathrm{d}}           % derivatives
\newcommand{\N}{\mathbb{N}}

\newcommand{\R}{\mathbb{R}}

\newcommand{\K}{\mathcal{K}}

\renewcommand{\L}{\mathscr{L}}

\newcommand{\ep}{\varepsilon}

\newcommand{\tip}{\textrm{tip}}
\newcommand{\erf}{\textrm{erf}}

\newcommand{\wt}{\widetilde}
\newcommand{\wh}{\widehat}

\renewcommand{\max}{{\textrm{max}}}

	\theoremstyle{plain}

	  \newtheorem{assumption}{Assumption}
        \newtheorem{hypothesis}{Hypothesis}
	\newtheorem{proposition}{Proposition} 
	\newtheorem{theorem}{Theorem}
	
	\theoremstyle{definition}
        \newtheorem{remark}{Remark}

\title{Kernel-dependent pattern formation in a population model with nonlocal facilitation and competition}

\author[1]{Olivia Clifton}
\author[2]{Stephanie Dodson}
\author[1,3]{Daniel B. Cooney}
\affil[1]{Department of Mathematics, University of Illinois Urbana-Champaign, Urbana, IL, USA}
\affil[2,*]{Mathematics Department, Colby College, Waterville, ME, USA}
\affil[3]{Carl R. Woese Institute for Genomic Biology, University of Illinois Urbana-Champaign, Urbana, IL, USA}
\affil[*]{Correspondence to sdodson@colby.edu}

\begin{document}

\maketitle

\begin{abstract}
Spatial patterns, such as those in dryland vegetation models,  have historically been studied in systems of reaction-diffusion systems with pattern onset via a Turing bifurcation from a spatially uniform state. More recently, spatial patterns have been considered in models that incorporate spatially extended interactions via nonlocal interaction kernels. It remains largely underexplored if and how the choice of nonlocal interaction kernel contributes to differences in pattern formation and persistence, particularly in models that contain competition and facilitation. Here, we investigate spatial patterns in a  reaction-diffusion model for a single species that includes nonlocal competition and facilitation processes; Gaussian, exponential, algebraic, hat, and smooth hat kernels are considered as specific examples. Via a center manifold analysis, and using the relative spatial scale of competition to facilitation and the death rate as bifurcation parameters, we identify that the choice of kernel has impacts on the pattern forming bifurcation. Bifurcations using the Gaussian, exponential, and algebraic kernels largely follow expectations of Turing patterns, but patterns in the hat and smooth hat kernels can form even when the scale of competition is less than that of facilitation. The dynamics of patterns far from onset are investigated via numerical continuation methods. The model produces the so-called ``Turing-before-Tipping'' phenomenon demonstrating that the arrangement into spatial patterns is an effective resilience mechanism against harsh conditions. Again, there is a kernel-dependent dichotomy in pattern behavior. Early warning signs for population extinction are observed with the Gaussian, exponential, and algebraic kernels, but not under the hat or smooth hat cases.

\end{abstract}

{\bf Keywords:} pattern formation, nonlocal reaction-diffusion equation, center manifold analysis, Turing before tipping

{\bf MSC Codes:}\\
35Q91: PDEs in connection with game theory, economics, social and behavioral sciences\\
35Q92: PDEs in connection with biology, chemistry and other natural sciences\\
5B36: Pattern formations in context of PDEs\\
35K57: Reaction-diffusion equations\\
35R09: Integro-partial differential equations\\

{\bf Acknowledgments:} This research collaboration arose from a project at the 2023 Mathematics Research Community on Complex Social Systems, which was organized by the American Mathematical Society and supported by the US National Science Foundation through grant DMS-1916439. We thank Jeungeun Park, Rebecca Hardenbrook, and Jo\~{a}o Souto-Maior for their helpful discussions and collaboration at the MRC workshop on an earlier version of this project.

\section{Introduction}

Spatial patterns are abundant across a range of biological, chemical, and environmental applications. In particular, spatial patterning has recently deepened understanding of vegetation dynamics in semi-arid ecosystems. %
In spatially uniform models of vegetation dynamics, increasing environmental stress often leads to a catastrophic transition—or “tipping”—from a vegetated to a barren state. However, spatially patterned vegetation can delay or even prevent such collapse, enabling ecosystems to persist under harsher conditions than uniform models would predict \cite{rietkerk2021evasion}. The formation of such patterns is now recognized both as a warning sign for desertification and as a mechanism for ecosystem resilience \cite{rietkerk2004self,rietkerk2021evasion}. Therefore, studying how such patterns emerge and persist can provide important insight for the application. %
 
In the vegetation patterns literature, the modulational (Turing) instability of a uniform vegetated state is commonly triggered by increasing environmental stress or worsening environmental conditions. The Turing bifurcation is a classical mechanism, in which patterns emerge from a spatially uniform state due to competing forces of short range activation and long-range inhibition \cite{turing1952chemical,gierer1972theory,tyson2026revisiting}. Once established, the persistence and stability of spatially periodic patterns can be understood in terms of a \emph{Busse balloon}, which defines the wavenumbers of stable patterned states. Recent studies have focused on characterizing Busse balloons for semi-arid ecosystems \cite{ferre2025vegetation,vanderStelt2013riseandfall,vanderveken2023existence}, the mechanisms by which patterns lose stability \cite{doelman2012hopf,sewalt2017spatially,vanderStelt2013riseandfall}, and how periodic states evolve through the Busse ballon region under time-varying conditions \cite{asch2025slow,siteur2014beyond,hamster2025blurring}. Observations show that characteristic wavelengths tend to increase with environmental harshness, with a rich and structured transition sequence near collapse that may provide a warning sign \cite{gilad2004ecosystem,gowda2016assessing,rietkerk2002self,rietkerk2004self,vonHardenberg2001diversity,meron2018patterns}.

Mathematically, much work on dryland systems has centered on reaction-diffusion models such as Klausmeier--Gray--Scott-type systems \cite{bastiaansen2018multistability,lejeune2004vegetation,sewalt2017spatially,siero2015striped,vanderStelt2013riseandfall}, and various extensions such as models with pulsed precipitation \cite{eigentler2020effects,gandhi2023pulsed,gandhi2025flow}. In particular, detailed analyses of vegetation Busse balloons and pattern multistability have primarily been done in these types of models \cite{ferre2025vegetation,hamster2025blurring,siteur2014beyond,vanderStelt2013riseandfall}. In the reaction-diffusion framework, differences in the diffusivities of biomass and water generate the necessary separation of spatial scales that underlies pattern formation. An alternative modeling approach in the mathematical biology literature, however, includes the separation of scales explicitly through nonlocal interactions, where intra- and inter-species facilitation and competition are represented by spatial kernels \cite{lefever1997origin,ruiz2020patterns,ruiz2017fairy,tega2022spatio,tlidi2008vegetation,patterson2024pattern}. These nonlocal models can capture not only the effective scale separation of reaction-diffusion systems but also mechanistic spatial processes such as root competition and facilitative processes such as canopy shading, shielding from fire, the sharing of resources, and benefits of aggregation. 
Nevertheless, such models have received comparably little attention in the mathematical literature, and warrant a systematic study.

It is well-known that scalar nonlocal equations exhibit a much richer set of solutions than their local counterparts \cite{Britton1989, Britton1990,Gourley2001,Kondo2017}. The addition of nonlocal terms adds the possibility of non-monotonic solutions, including pulses, periodic waves, and patterns. Additionally, the ability to choose the nonlocal kernel(s) adds a significant degree of freedom to such equations, and the impact of that choice is not always clear. In many applications, the specific interaction kernel used is chosen for mathematically convenient properties (such as Gaussian or exponential kernels).   However, the choice of interaction or dispersal kernel can have important consequences for patterns. A study of nonlocal intraspecies competition in a two-variable predator-prey system demonstrated that patterns are more likely to form with kernel structures that exhibit a broad peak and narrow tails \cite{Merchant2011}. Similarly, nonlocal dispersal governed by Gaussian and exponential kernels versus a top-hat kernel yielded major qualitative differences in pattern onset in a $\lambda-\omega$ system \cite{Sherratt2014}. Conversely, an analysis of nonlocal seed dispersal in the Klausmeier model found that the kernel width and dispersal rates proved to be more impactful on pattern formation than the type of kernel \cite{Bennett2019}.

 In this manuscript, we study a simple scalar population model that includes effects of both nonlocal competition and nonlocal facilitation. The uniform dynamics in the model exhibit tipping, and thus we study pattern formation and persistence past the tipping point. 
 Throughout, we are interested in the impact of kernel choice on bifurcations, both qualitatively and quantitatively.

Specifically, we study the following model for the formation of patterns in one spatial dimension:

\begin{equation}\label{e:nonlocal_veg}
    \dfrac{\partial u}{\partial t} = gu \left(1-c+ \frac{f_\textrm{max}   \K_f * u}{f_\textrm{max}/f_0 +  \K_{f} * u}\right)\left( 1-\dfrac{\K_{c}*u}{M}\right) - du + k_d \dfrac{\partial^{2} u}{\partial x^2}.
\end{equation}

This particular model was formulated by Hermsen \cite{hermsen2022emergent} to investigate the evolution of altruistic colonies and emergent dynamics that underlie multilevel selection. 
It is notably reminiscent of the vegetation model considered by Lefevre and Lejeune in \cite{lefever1997origin}. 
Briefly, $u=u(x,t)$ defines the population density and kernels $\mathcal{K}_{f,c} = \mathcal{K}_{f,c}(x; \sigma_{f,c})$ represent the nonlocal facilitation and competition, respectively, where parameters $\sigma_{f,c}$ control the spatial length scales of the two processes. Our main bifurcation parameters are $d$, the linear death rate, standing in for environmental harshness, and $\sigma_c$, the spatial length scale of competition. Throughout, we fix the spatial length scale of facilitation at $\sigma_f = 1$, so that using $\sigma_c$ as a bifurcation parameter is equivalent to considering the ratio $\frac{\sigma_c}{\sigma_f}$ of the two spatial scales.

 As techniques, we employ a center manifold expansion in periodic function spaces to show the existence of a pattern forming bifurcation, and examine coperiodic stability of patterns by finding kernel-dependent coefficients which determine if bifurcations are super- or subcritical. The latter is of interest for two reasons: first, subcritical bifurcations may suggest the existence of localized structures and/or homoclinic snaking. Second, recent work by Staal and Doelman finds that in the limit where the onset of patterns is very close to the tipping point, whether bifurcations are super- or subcritical can determine whether patterns outlast the tipping point \cite{staal2025evasion}. It is therefore of interest to be able to determine sub- or supercriticality for a wide range of parameters and kernels without doing lengthy numerical continuation in each case. 
 %We implicitly assume a degree of generality in the dynamics and leave the kernels general in the analysis of pattern onset. 
 We also use numerical continuation methods to illustrate the regions of pattern stability far from initial onset. This is complemented by some direct simulations investigating pattern reselection
 %how patterns transition through this region of pattern multistability 
 as the environmental harshness parameter slowly increases. We thus show that the phenomenon of ``Turing-before-tipping'' can be found in a simple scalar nonlocal model, and we give examples of how pattern resilience depends on the choice of nonlocal kernel.

Interestingly, our results find a dichotomy of pattern behavior between two kernel classes. The first class of kernels  can be defined as those whose Fourier transforms are nonnegative, which we refer to as ``Gaussian-type." The second class are those whose Fourier transforms take on negative values, and we refer to them as ``hat-type," referencing the example of the top-hat function. 

For Gaussian-type kernels, we find robust evidence of the typical "Turing-before-tipping" story. Under mild assumptions, increasing the death rate $d$ generically induces a pattern-forming instability. 
Past onset, numerical continuation then reveals regions of stable patterned states that persist past the tipping point as $d$ is increased. Direct simulations with slowly increasing $d$ display the typical pattern of wavenumber reselection towards smaller wavenumbers, possibly through a period-doubling and/or Hopf instability. We note that despite the qualitatively similar behavior, kernel choice can nevertheless significantly affect the bifurcation points as well as pattern wavelength.

For hat-type kernels, the story is less straightforward. First, we find that the death rate is not generically an organizing parameter for modulational instability for these kernels. Instead, typically, the uniform state is either stable or unstable to spatially periodic perturbations for all values of $d \ge 0$. Although it is possible to find kernels and parameters for which increasing $d$ would generate a Turing instability, it takes careful tuning and requires either unphysically large diffusion or small $\sigma_c$. The scale of competition $\sigma_c$ does always induce a modulational instability, but for small diffusion, patterns are present for $\sigma_c <1$, in contradiction to the standard notion of a Turing mechanism. Numerical continuation again provides evidence of pattern resilience, but, interestingly, hat-type kernels did not produce the typical early warning sign of wavelengths progressively becoming unstable as $d$ increases; instead, patterns typically remain stable, without changing wavenumber, until they abruptly disappear. 
In sum, our results investigate the "Turing-before-tipping" phenomenon in a scalar nonlocal model quite different to the reaction-diffusion and pulsed-precipitation frameworks. We find that the choice of nonlocal interaction kernels can affect all aspects of these bifurcations, from pattern onset to pattern reselection in the Busse balloon. 
%

%{\color{blue}[Paper Outline.]} 
The manuscript is outlined as follows. A full model description and mathematical assumptions on kernels can be found in Section~\ref{s:setup}. Section~\ref{s:uniform} defines the spatially uniform states and relevant linearizations. Section~\ref{s:CM} presents the main results about pattern existence and the bifurcation of patterns from the positive uniform state. Finally, the dynamics of patterns far from onset is discussed in Section~\ref{s:far_from_onset}. We end with a discussion of implications and further directions in Section \ref{s:discuss}.

\section{Mathematical Background \& Model Description}\label{s:setup}

We consider the equation 
\begin{equation}\label{e:main}
    \dfrac{\partial u}{\partial t} = g u \left(1-c + \frac{f_\textrm{max}  \K_f * u}{f_\textrm{max}/f_0 +  \K_{f} * u}\right)\left( 1-\dfrac{\K_{c}*u}{M}\right) - du + k_d \dfrac{\partial^{2} u}{\partial x^2}, \qquad x \in \R, \textrm{}
\end{equation}
 where $g$ is a baseline population growth rate, $c$ is the cost of facilitation to the individual, $M$ sets a carrying capacity above which competition will directly decrease population size, and $d$ is the linear death rate. The nonlocal facilitation and competition are described respectively by a facilitation kernel $\K_f$ and a competition kernel $\K_c$. The benefit of facilitation has a saturating dependence on the weighted interactions $\K_f * u$ -- when $u$ is small, the facilitation is approximately $f_0 \K_f *u$, but in the limit of many facilitative interactions $\K_f * u$, the benefit approaches a maximum $f_{\textrm{max}}$. Furthermore, the diffusion term with coefficient $k_d$ captures spatial motility of the population. 

\paragraph{Parameters.} In addition to the non-negative parameters $g,c,k_d, f_0, f_\max,M$, and $d$, we consider two spatial scaling parameters $\sigma_f, \sigma_c$ which control $\K_f,\K_c$ via $\K_{c,f}(x) = \frac{1}{\sigma}\wt{\K}_{c,f}(\frac{x}{\sigma})$. We fix $\sigma_f = 1$, so that the spatial scale of competition $\sigma_c >0$ represents the ratio $\frac{\sigma_c}{\sigma_f}$.

\begin{assumption}[Assumptions on Convolution Kernels]\label{a:kernels}

We assume the following for the nonlocal competition and facilitation kernels $\K_c, \K_f:$
\begin{enumerate}
    \item There exist functions ${\wt\K}_c, {\wt\K}_f$  such that $\K_{c}(x) = \frac{1}{\sigma_c}\wt{\K}_{c}(\frac{x}{\sigma_c})$, $\K_{f}(x) = \frac{1}{\sigma_f}\wt{\K}_{f}(\frac{x}{\sigma_f})$
    \item $\wt\K_c, \wt\K_f \in C^1(\R, \R)$, with $\int_{-\infty}^\infty \K_c(x)dx = \int_{-\infty}^\infty \K_{f}(x)dx = \int_{-\infty}^\infty \wt\K_c(x)dx = \int_{-\infty}^\infty \wt\K_f(x)dx = 1$;
    \item $\wt\K_c, \wt\K_f$ are even (radial) functions.
    \item $\int_{-\infty}^\infty x^2\wt\K_c(x)dx = x^2\int_{-\infty}^\infty \wt\K_f(x)dx = 1$, when these integrals are defined. If one (or both) is not, instead assume $\int_{-1}^1\wt\K_{c,f}(x)dx = \frac{2}{3}$. 
    %\item $\K_c, \K_f$ are monotone decreasing in $|x|$. 
\end{enumerate}
\end{assumption}

\subsection{Examples of Nonlocal Interaction Kernels}\label{ss:kernels}

The choice of interaction kernels $\K_c, \K_f$  in equation~\eqref{e:main} leads to a greater degree of freedom than a similar local equation would have. 
 We phrase results in section \ref{s:CM} in general terms, but to illustrate practical effects of the kernel choice, we will focus on a number of examples throughout the paper. In those examples, we assume that the facilitative and competitive kernels are the same, differing only by the spatial scaling parameters $\sigma_f, \sigma_c$. 
 
 We distinguish between two classes of kernels depending on their Fourier transforms. The first class, which we refer to as Gaussian-type, are those whose Fourier transforms are nonnegative. Many examples of these kernels model situations where interactions can occur at arbitrarily large distance and the interaction strength decreases with distance. The three example kernels (Gaussian, exponential, and algebraic) display differing rates of decrease. For the Gaussian kernel, competition drops off with distance at a faster-than-exponential rate; for the algebraic kernel, the dropoff is much slower, decaying like $\frac{1}{x^2}$. Long-range competitive interactions can arise physically when considering effects like competition for groundwater during a rainstorm, where the ground becomes saturated and surface water can diffuse over long distances. Not all kernels of Gaussian type have arbitrarily long-range interactions, however -- for instance, the triangle kernel $\K(x) = \max\left(\frac{1}{\ell} (x - \frac{1}{\ell}), 0 \right)$ from \cite{vanderVoort2026vegetation} has a finite range of interactions, but nonnegative Fourier transform. 

 The second class of kernels, which we refer to as hat-type, are those whose Fourier transforms have negative values. Many examples of these kernels come from interactions which drop off sharply and occur only within a finite radius. Such interactions may arise, for instance, through root systems.

We present below examples of Gaussian-type kernels (Gaussian, exponential, and algebraic) and the hat-like kernels (top hat and smooth hat) that will be used in numerical simulations throughout the paper. We also provide illustration of each of these kernels and their Fourier transform in Figure \ref{fig:kernels}. 

\begin{figure}[!ht]
    \centering
    \includegraphics[width=0.4\linewidth]{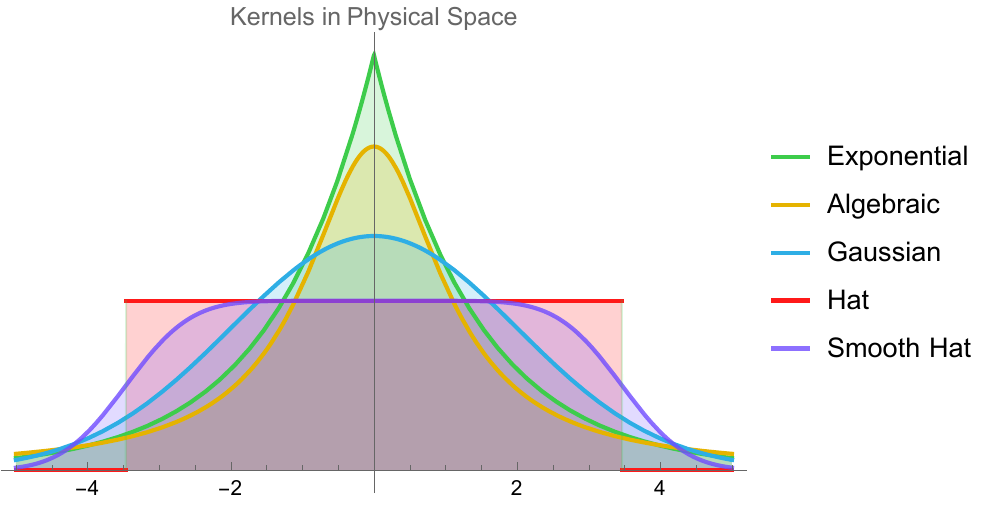}
    \includegraphics[width=0.4\linewidth]{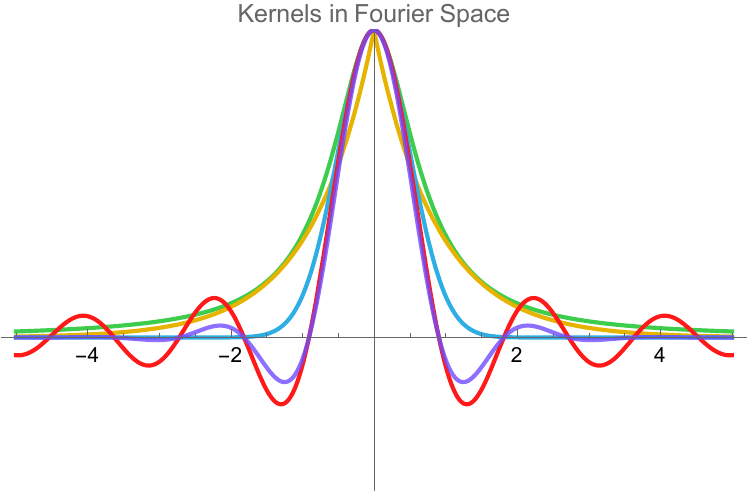}
    \caption{Plots of example kernels with $\sigma_c = 2$. Left, plots of kernels in physical space. Right, plots of kernels in Fourier space. }
    \label{fig:kernels}
\end{figure}

\bf Examples of Gaussian-type kernels: \rm 
\begin{itemize}
    \item Gaussian: $\K(x) = \frac{1}{\sigma\sqrt{2\pi}}e^{-\frac{x^2}{2\sigma^2}}$
    \item Exponential: $\K(x) = \frac{1}{\sigma\sqrt{2}}e^{-\frac{|x|\sqrt{2}}{\sigma}}$
    \item Algebraic: $\K(x) = \frac{\sigma}{\pi\sqrt{3}(\frac{\sigma^2}{3} + x^2)}$
\end{itemize}

\bf Examples of Hat-type kernels: \rm 
\begin{itemize}
    \item Hat (top-hat): $\K(x) = \frac{1}{2\sigma\sqrt{3}}\chi_{[-\sigma\sqrt{3},\sigma\sqrt{3}]}$
    \item Smooth Hat (bowler-hat): $\K(x) = \frac{1}{4 \sigma \sqrt{3}} (\erf([\sigma\sqrt{3}- x) + \erf(\sigma\sqrt{3} + x))$
\end{itemize}

\begin{remark}
    The careful observer will note that the exponential kernel and hat kernel do not satisfy the smoothness condition in Assumption \ref{a:kernels} above. However, in practice they can be smoothly approximated with an arbitrary degree of precision, so the difference is negligible for computational purposes, and we list the unsmoothed versions for simplicity. 
\end{remark}

\subsection{Normalization of Kernel Width $\sigma$.} 

A main goal of comparing behavior across kernels is to test the implicit assumption that the spatial length scale $\sigma$ of interactions is more important than the shape of the kernel. %We will compare bifurcations for kernels with different shapes but comparable spatial scalings. 
To do so, however, we must define what it means for two different kernels to have the same effective spatial scale. We normalize the spatial scale $\sigma$ of a kernel using the second moment of the kernel:
\[\sigma^2 := 
\int_\R x^2\wt\K(x)dx,
\]
when it is finite. In other words, $\sigma$ is the standard deviation of the kernel. The algebraic kernel, which does not have a well-defined variance, is instead normalized so that $2/3$ of the mass lies within $[-\sigma,\sigma]$, approximately the same as for the Gaussian. 

This choice of normalization can also be interpreted as fixing the value of $\wh{\widetilde{\K}}''(0)$, or, equivalently, the second nonzero Taylor coefficient of the kernels in Fourier space about $\omega = 0$. The value of $\wh{\widetilde{\K}}(0)$, the first Taylor coefficient, is already fixed to $1$ by the area condition $\int_\R\K(x)dx = 1$ (Assumption \ref{a:kernels}, part 2), so choosing $\sigma$ in this way means that all the kernels have the same Taylor expansion in Fourier space, up to order 2, when defined.

\section{Uniform Dynamics and Tipping} \label{s:uniform}

In order to understand the existence and resilience of patterns, we must first understand the dynamics of the system without spatial variation. First, seeking spatially uniform constant solutions $u \equiv u_0$ to \eqref{e:main}, we find three branches of uniform states:
\begin{align*}
   u_0 &= 0, \frac{M\left( (1-c-\frac{d}{g}) + f_\max  (1-\frac{(1 - c)}{f_0 M} ) \pm \sqrt{ \left((1-c-\frac{d}{g} + 
   f_\max (1-\frac{1-c}{f_0 M})\right)^2-\frac{4 f_\max (f_\max+1 - c)    (\frac{d}{g} + c-1)}{M f_0}}\right) }{2  (1 - c + f_\max) }.
\end{align*}
We denote by $u_*$ the (largest) nonnegative uniform state. Note that the value of  $u_*$ equals the carrying capacity $M$ when $d=0$, and decreases as $d$ increases. Furthermore, it can be shown that $u_*$ is stable to spatially homogeneous perturbations.

\paragraph{Tipping case.} If $f_0 > \frac{1-c}{M}$, then, $u_*$ is lost to a fold bifurcation at a critical value $d_\tip$ as depicted in Figure~\ref{fig:uniform_sols}. The existence of the fold, or tipping point, represents a sudden jump from a positive uniform steady-state to a state where only the zero solution is an equilibrium. At $d = d_\tip$, the critical value of $u_*$ at collapse is
\begin{equation}
       u_\tip := \frac{f_\max}{f_0}\left(\sqrt{\frac{f_\max + f_0 M}{f_\max + 1 - c}}-1\right) >0;
\end{equation}

note that this has a finite limit $u_{\tip}^{\lim} = \frac{M}{2} - \frac{1-c}{2f_0}$ as $f_{\max} \to \infty$. 

\paragraph{Non-tipping case.} If $f_0 < \frac{1-c}{M}$, the uniform bifurcation structure is a different case;
$u_*$ becomes negative after a transcritical bifurcation at $d = g(1-c)$. This is the non-tipping case, since 
 the uniform state connects continuously to 0 without a sudden collapse. We will focus primarily on the scenario $f_0 > \frac{1-c}{M}$, since it describes a case of bifurcation-induced tipping, where increasing $d$ can cause a sudden, irreversible jump between solution branches.

In Figure \ref{fig:uniform_sols}, we illustrate how the behavior of the spatially uniform equilibria depends on the death rate $d$. For the tipping case (Figure \ref{fig:uniform_sols}a), we see that the equilibrium $u_*$ is the only biologically feasible state that is stable for low death rate $d$. As $d$ increases, the trivial state and $u_*$ become bistable, and then the trivial state becomes the only stable equilibrium as $d$ increases past the tipping point $d_{\tip}$. For the non-tipping case (Figure \ref{fig:uniform_sols}b), we see that the equilibrium $u_*$ again the sole stable feasible state for sufficiently small $d$, and that this equilibrium is lost to a transcritical bifurcation as $d$ increases.

\begin{figure}
    \centering
    \includegraphics[width=0.42\linewidth]{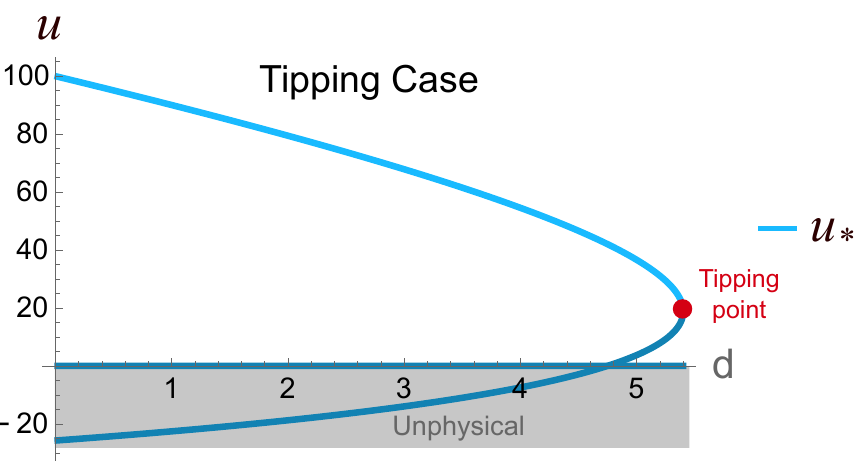}
     \includegraphics[width=0.42\linewidth]{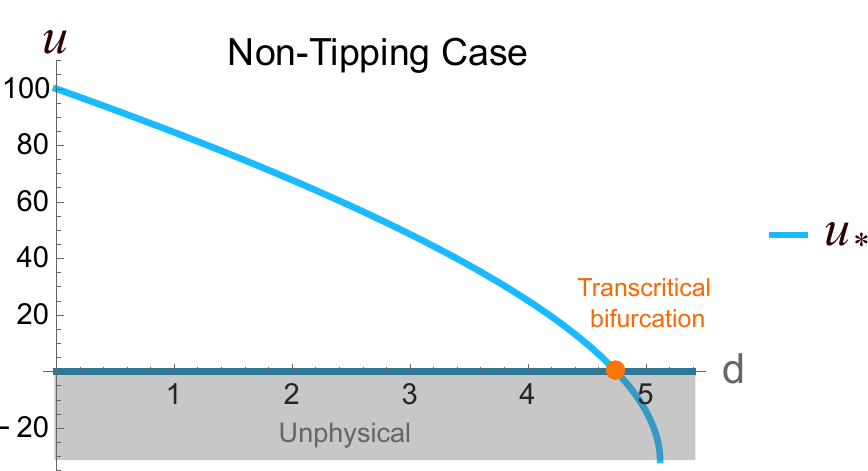}
    \caption{Plots of uniform solutions $u$ as the death rate $d$ varies (negative solutions are unphysical), in each of 2 cases. (a) Example of tipping case, with parameter values from Figure \ref{fig:par}. (b) Example of non-tipping case, with $f_0 = 0.005$ and all other parameters the same as in panel (a). }
    \label{fig:uniform_sols}
\end{figure}

\subsection{Baseline Simulation Parameters}

In Figure \ref{fig:par}, we present the baseline parameters that will be used for the numerical simulation of our PDE model. The reaction parameters are chosen so that the spatially uniform dynamics are in the tipping regime, and we set our default spatial scales of facilitation $\sigma_c = 1$ and competition $\sigma_c = 4$ to correspond to the Turing mechanism of short-range activation and long-range inhibition.

\begin{figure}[!ht]
\centering
\begin{tabular}{|c | c | c | 
}
 \hline
 Parameter &  Value used in numerics & Definition \\ [0.5ex] 
 \hline\hline
 $\sigma_f$ & 1  &  spatial scale of facilitation\\ 
 \hline
 $\sigma_c$ & 4  &  spatial scale of competition \\ 
 \hline
 $g$ & 5 &  basal growth rate\\ 
 \hline
  $c$ & 0.05 & cost of facilitation to individual \\ 
 \hline
 $M$ & 100  & carrying capacity\\
 \hline
 $k_d$ & 0.03 & coefficient of diffusion (dispersal)\\
 \hline
 $f_0$ & 0.025 & small-population facilitation rate \\
 \hline
 $f_\max$ & 2 & maximum benefit of facilitation\\ [1ex] 
 \hline
\end{tabular}
\caption{Values of parameters used in numerics in Section \ref{s:far_from_onset}}\label{fig:par}
\end{figure}

\subsection{Linearization at Uniform State}

In order to consider pattern formation from the uniform state $u_*$, we will need the linearization of \eqref{e:main} at $u_*$. Let 
\begin{equation}\label{e:tdef}
\begin{split}
    t_0 & = \frac{f_\textrm{max}  u_* }{f_\textrm{max}/f_0 +  u_*  },
    \qquad \qquad t_1  = \frac{f_\textrm{max}^2}{f_0(f_\textrm{max}/f_0 +  u_* )^2}, \\
    t_2 & = \frac{-f_\textrm{max}^2}{f_0(f_\textrm{max}/f_0 + u_*)^3},
    \qquad t_3  = \frac{f_\textrm{max}^2}{f_0(f_\textrm{max}/f_0 + u_*)^4}\\
    \end{split}
\end{equation}
represent the first four Taylor coefficients of the saturating facilitation function $\frac{f_\textrm{max}   u}{f_\textrm{max}/f_0 +   u}$, expanded at $u = u_*$. Then consider a solution $u = u_* + \wt u$.
Near the uniform state $u_*$, one finds $\frac{\partial \wt u}{\partial t} = \L (\wt u)$ at the linear level, with 
\begin{equation}\label{e:lin}
\L(\wt u) = k_d\frac{\partial^2\wt u}{\partial x ^2} + gu_*t_1(1-u_*/M)\K_f*\wt u - \frac{gu_*}{M}(1-c+t_0)\K_c*\wt u.
\end{equation}

Note that in Fourier space, the linear operator becomes the multiplication operator
\begin{equation}
    \begin{split}
        \wh{\L(u)}(\omega) &= \left(g u_*\left( t_1 \left(1-\frac{u_*}{M}\right)\wh\K_f(\omega) - \frac{(1-c+t_0)}{M}\wh\K_c(\omega)\right) - k_d\omega^2\right)\wh{{u}}(\omega)\\
    &:= \wh\L(\omega)\wh{{u}}(\omega)
    \end{split}
\end{equation}
where $\wh\K_{f,c}(\omega):= \int_{\mathbb{R}}\K_{f,c}(x) e^{-i\omega x} \, \mathrm{d}x$ %
represent the Fourier transforms of the respective interaction kernels. 

\section{Onset of Patterns: Bifurcation from Uniform State} \label{s:CM}

In this section, we present our analytical characterization of spatial pattern formation in the nonlocal PDE \eqref{e:main} starting near the uniform equilibrium state $u_*$ as well as the stability of emerging patterns near the bifurcation threshold. We present the linear stability analysis of our model in Section \ref{sec:LSA}, exploring in Propositions 1-4 how increasing the death rate $d$ or length-scale $\sigma_c$ of competition can produce pattern-formation for both Gaussian-like and hat-like kernels. We then explore the coperiodic stability of patterns near onset through a center manifold reduction in Section \ref{sec:coperiodicstability}, resulting in our classification of the conditions guaranteeing a subcritical or supercritical pitchfork bifurcation in Theorem \ref{t:main}. Finally, we present the proof of Theorem \ref{t:main} in Section \ref{sec:centermanifoldproof} and provide illustrations of pattern-forming bifurcation for our family of example kernels in Section \ref{sec:stabilityresults}. 

\subsection{Linear Stability Analysis}
\label{sec:LSA}

We start by examining conditions under which the uniform state can undergo a Turing instability. Plugging in a perturbation of the form $e^{\lambda t - i\omega x}$ to the linearized equation $u_t = \L u$, with $\L$ the linearization from \eqref{e:lin}, one obtains the following dispersion relation for $\lambda(\omega):$
\begin{equation}\label{e:disp}
\begin{split}
    \lambda(\omega) &= g u_*\left( t_1 \left(1-\frac{u_*}{M}\right)\wh\K_f(\omega) - \frac{(1-c+t_0)}{M}\wh\K_c(\omega)\right) - k_d\omega^2\\
    &= \wh\L(\omega).
    \end{split}
\end{equation}
 Whether $\lambda$ is positive or negative determines whether a perturbation from the constant state $u_*$ of the form $\cos(\omega x)$ is expected to grow or decay exponentially in time. If $\lambda$ is negative for all frequencies $\omega$, then the uniform state is (spectrally) stable. If $\lambda$ is positive for some nonzero frequency $\omega$, then a Turing instability is possible. 

 We are particularly interested in cases where the uniform state $u_*$ becomes unstable to patterns as a parameter is modulated. We focus especially on the parameters $d$ and $\sigma_c$, which represent the linear death rate and the ratio of spatial scales of competition and facilitation, respectively. Broadly speaking, one would expect patterns as $d$ and $\sigma_c$ increase, since increasing $d$ represents environmental harshness, and increasing $\sigma_c$ corresponds to competition (inhibition) occurring over a larger spatial scale than facilitation (activation), a mechanism paramount to Turing instabilities. 

 We find that indeed, $d$ and $\sigma_c$ trigger modulational instabilities for Gaussian-like kernels, provided the diffusion is sufficiently small.  We state this formally in Propositions \ref{p:gauss_suff1}-\ref{p:gauss_suff2} below.  %
 
 \paragraph{Bifurcations for Gaussian-like kernels.}
 %We now state this formally. 
 In all of the following, the auxiliary parameters $g, f_{\max}, M > 0$ and $0 < c < 1$ are fixed but arbitrary. We require $f_0 > \frac{1-c}{M}$, so that the uniform dynamics are in the tipping case, but otherwise $f_0$ is also arbitrary. In Propositions \ref{p:gauss_suff1} and \ref{p:gauss_suff2}, we consider $\K_c, \K_f$ of Gaussian type, with $\wh{\K_c}, \wh{\K_f}$ are nonnegative functions. We also need one more assumption about the kernels: either $\int_\R x^2 \K_f, \int_\R x^2 \K_c < \infty$, or $\K_c$ and $\K_f$ are scaled versions of the same kernel, with $\frac{1}{\sigma_c}\K_c(\frac{\cdot}{\sigma_c}) = \K_f(\cdot)$. In other words, if the kernels are not sufficiently localized, then we assume the facilitation and competition kernels are the same up to scaling by $\sigma_c$. The first alternative of kernel localization is quite mild, and the only kernel among the examples given that does not satisfy it is the algebraic kernel, which decays like $\frac{1}{x^2}$ as $|x| \to \infty$. 
%Given this setup, we have the following: 
Given this setup, we have the following:

\begin{proposition}[Bifurcation in $\sigma_c$ for Gaussian-like kernels]\label{p:gauss_suff1}
   Fix $0 < d < d_\textrm{tip}$.
    \begin{enumerate}
        \item There exists $\sigma_c > 0$ small enough such that $\wh\L_{\sigma_c}(\omega) < 0$ for all $\omega$, and the uniform state is linearly stable. 
        \item If the unscaled kernels satisfy $\wh{\wt{\K}_f}(\omega) \le \wh{\wt{\K}_c}(\omega)$, then the above holds for $\sigma_c = 1$.
         \item For sufficiently small diffusion coefficient $k_d$, there exists $\sigma_* >1$ and $\omega_* > 0$ such that $\wh\L_{\sigma_*}(\omega_*) > 0$. When $\sigma_c = \sigma_*$, the uniform state is linearly unstable to perturbations with frequency $\omega_*$. 
    \end{enumerate}\end{proposition}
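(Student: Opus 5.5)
The plan is to work directly with the dispersion relation \eqref{e:disp}, written as
\[
\wh\L_{\sigma_c}(\omega) = A\,\wh\K_f(\omega) - B\,\wh\K_c(\omega) - k_d\omega^2,\qquad A := g u_* t_1\Bigl(1-\tfrac{u_*}{M}\Bigr),\quad B := \tfrac{g u_*(1-c+t_0)}{M}.
\]
Since $\int_\R\K_c=1$, the uniform state $u_*$ and the coefficients $t_0,t_1$ do not depend on $\sigma_c$, so neither do $A$ and $B$. The only $\sigma_c$-dependence is through $\wh\K_c(\omega)=\wh{\wt\K_c}(\sigma_c\omega)$.

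I would first record three preliminary facts.

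(i) $\wh\L(0)=A-B<0$ for $0<d<d_\tip$. This expresses the stability of $u_*$ to homogeneous perturbations stated in Section~\ref{s:uniform}. I would check it concretely by differentiating the uniform nonlinearity at $u_*$ on the upper branch away from the fold; the inequality is strict precisely because $d<d_\tip$.

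(ii) $A>0$. We have $t_1>0$, and $0<u_*<M$ for $d>0$ because $u_*$ equals $M$ at $d=0$ and decreases in $d$.

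(iii) $0\le\wh\K_{f,c}\le 1$, and $\wh\K_{f,c}$ is continuous with value $1$ at $0$ and tends to $0$ at infinity by Riemann--Lebesgue. Nonnegativity is the Gaussian-type assumption; the upper bound follows from $\int\K=1$.

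Establishing (i) cleanly from the uniform equation is the step I expect to need the most care. It is also where $d<d_\tip$ is genuinely used, since $A-B\to0$ at the fold.

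\emph{Part 2.} With $\sigma_c=1$ and $\wh{\wt\K_f}\le\wh{\wt\K_c}$, fact (ii) gives
\[
\wh\L(\omega)\le (A-B)\,\wh\K_c(\omega)-k_d\omega^2 .
\]
By (i) and (iii), the right-hand side is $\le 0$. It is strictly negative for $\omega\neq0$ once $k_d>0$, and equals $A-B<0$ at $\omega=0$.

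\emph{Part 1.} Fix $R$ with $k_dR^2>A$. Then for $|\omega|\ge R$ we get $\wh\L(\omega)\le A-k_d\omega^2<0$, using $\wh\K_f\le1$ and $\wh\K_c\ge0$. On the compact set $|\omega|\le R$, continuity of $\wh{\wt\K_c}$ at $0$ gives $\wh{\wt\K_c}(\sigma_c\omega)\ge 1-\ep$ uniformly once $\sigma_c\le\sigma_0(\ep,R)$. Hence
\[
\wh\L(\omega)\le A-B(1-\ep),
\]
which is negative for $\ep<(B-A)/B$. If one wants $k_d=0$ included, the large-$|\omega|$ region needs a different argument; I would state the proposition for $k_d>0$, or else note that this case requires comparing the decay rates of $\wh\K_f$ and $\wh\K_c$.

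\emph{Part 3.} By (iii) and continuity at $0$, choose $\omega_*>0$ small with $\wh\K_f(\omega_*)\ge\tfrac12$. For $k_d<A/(4\omega_*^2)$ we have $A\wh\K_f(\omega_*)-k_d\omega_*^2\ge A/4>0$. As $\sigma_c\to\infty$, Riemann--Lebesgue gives $\wh\K_c(\omega_*)=\wh{\wt\K_c}(\sigma_c\omega_*)\to0$. Hence some $\sigma_*>1$ satisfies $B\wh{\wt\K_c}(\sigma_*\omega_*)<A/8$, and then $\wh\L_{\sigma_*}(\omega_*)>A/8>0$.

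The extra localization hypothesis (finite second moments, or identical shapes in the algebraic case) is not needed for this existence statement. I would only invoke it if one wants $\sigma_*$ to be the first crossing, via uniform control of $\wh{\wt\K}$ near $0$ through its second-order Taylor expansion.
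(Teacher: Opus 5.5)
Your proposal is correct and follows the paper's proof essentially step for step. In Part 1 you split into a large-$|\omega|$ region controlled by diffusion and a compact region controlled by continuity of $\wh{\wt\K_c}$ at $0$; in Part 2 you use the bound $\wh\L(\omega)\le (A-B)\wh\K_c(\omega)-k_d\omega^2$; and in Part 3 you apply Riemann--Lebesgue to $\wh{\wt\K_c}(\sigma_c\omega_*)$ as $\sigma_c\to\infty$, exactly as the paper does with its constants $C_f=A$, $C_c=B$.

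Your write-up is, if anything, more careful than the paper's. You make explicit the facts $A-B<0$ (homogeneous stability, which the paper simply asserts) and $k_d>0$. You also state the smallness condition correctly as $\sigma_c R$ small, whereas the paper's ``$\delta\sigma_c<\omega_0$'' has the roles reversed. One caveat, shared with the paper: the bound $\wh\K\le 1$ really uses $\K\ge 0$, not just $\int\K=1$, so this is an implicit assumption.
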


In other words, for kernels with nonnegative Fourier transforms, and with small enough diffusion constant, increasing the competitive range $\sigma_c$ will always generate a modulational instability of the uniform state.  The second statement in the proposition adds that if the unscaled facilitation kernel is at least as localized as the unscaled competition kernel, in the sense that $\wh{\wt{\K}_f}(\omega) \le \wh{\wt{\K}_c}(\omega)$, then the modulational instability occurs in the parameter regime $\sigma_c > 1$, where the competitive range is longer than the facilitative range. That is, the fundamental Turing idea of long-range inhibition and short-range activation is not just sufficient, but necessary for patterns when the model uses Gaussian-like kernels. The localization here is framed is in terms of the Fourier transforms, but a sufficient condition is that each of the higher moments $\int_\R x^m\wt\K_f(x)dx$, $m \ge 4$, is less than or equal to that of the competition kernel $\wt\K_c$, which naturally includes the case where the kernels are the same up to scaling. (The integrals and 2nd moments are equal by assumption.) In practice, facilitation mechanisms, such as canopy shading and increase in soil-water absorption, have an inherently finite range, so it is quite reasonable for them to be more localized than competitive interactions.

    \begin{proposition}[Bifurcation in $d$ for Gaussian-like kernels]\label{p:gauss_suff2}
  Fix $\sigma_c > 1$.
    \begin{enumerate}
         \item If $d = 0$, then $\wh\L_{d}(\omega) < 0$ for all $\omega$, and the uniform state is linearly stable. 
        \item  If the diffusion coefficient $k_d$ is sufficiently small, then there exists a death rate $0 < d_* < d_\textrm{tip}$ and $\omega_* > 0$ such that $\wh\L_{d_*}(\omega_*) > 0$. When $d = d_*$, the uniform state is linearly unstable to perturbations with frequency $\omega_*$. 
    \end{enumerate}\end{proposition}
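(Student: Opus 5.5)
The plan is to work directly with the dispersion relation \eqref{e:disp} and to use the fold structure of the uniform branch. Write
\[
\lambda_d(\omega) = g u_*\bigl(A\,\wh\K_f(\omega) - B\,\wh\K_c(\omega)\bigr) - k_d\omega^2, \qquad A := t_1\Bigl(1-\frac{u_*}{M}\Bigr),\quad B := \frac{1-c+t_0}{M},
\]
where $u_*=u_*(d)$, $t_0$ and $t_1$ all depend continuously on $d\in[0,d_\tip]$. Note that $B>0$ always.

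\textbf{Part 1.} At $d=0$ we have $u_*=M$, so $A=0$ and
\[
\lambda_0(\omega) = -g(1-c+t_0)\,\wh\K_c(\omega) - k_d\omega^2 .
\]
Since $\wh\K_c\ge 0$, this is $\le -k_d\omega^2<0$ for $\omega\neq0$ (assuming $k_d>0$). At $\omega=0$ it equals $-g(1-c+t_0)<0$, because $\wh\K_c(0)=1$. If $k_d=0$ were allowed, one would need strict positivity of $\wh\K_c$; I would remark on this rather than prove it.

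\textbf{Part 2, step 1: behaviour at the tipping point.} First I identify $\lambda_d(0)=gu_*(A-B)$ with the derivative at $u_*$ of the uniform reaction term
\[
F(u)=gu(1-c+f(u))(1-u/M)-du, \qquad f(u)=\frac{f_\max u}{f_\max/f_0+u}.
\]
Differentiating and using the equilibrium relation $g(1-c+t_0)(1-u_*/M)=d$ gives $F'(u_*)=gu_*(t_1(1-u_*/M)-(1-c+t_0)/M)$. At the fold $d=d_\tip$ this derivative vanishes, so $A=B$ there. Hence
\[
\lambda_{d_\tip}(\omega) = g u_\tip B\bigl(\wh\K_f(\omega)-\wh\K_c(\omega)\bigr) - k_d\omega^2 .
\]

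\textbf{Part 2, step 2 (main obstacle): a frequency with $\wh\K_f(\omega_0)>\wh\K_c(\omega_0)$.} By the scaling in Assumption~\ref{a:kernels}, $\wh\K_c(\omega)=\wh{\wt\K}_c(\sigma_c\omega)$ and $\wh\K_f(\omega)=\wh{\wt\K}_f(\omega)$, with $\sigma_f=1$. I treat the two alternatives separately.

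\emph{Finite second moments.} Here $\wh{\wt\K}(\omega)=1-\omega^2/2+o(\omega^2)$ for both kernels, by the normalization of the second moment to $1$. Therefore
\[
\wh\K_f(\omega)-\wh\K_c(\omega)=\tfrac12(\sigma_c^2-1)\omega^2+o(\omega^2),
\]
which is positive for small $\omega\ne0$ since $\sigma_c>1$.

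\emph{Same kernel up to scaling.} Here no expansion is available, so I use an iteration argument. Suppose $\wh{\wt\K}(\omega)\le\wh{\wt\K}(\sigma_c\omega)$ for all $\omega>0$. Iterating gives $\wh{\wt\K}(\omega)\le\wh{\wt\K}(\sigma_c^n\omega)$ for every $n$. The right-hand side tends to $0$ by Riemann--Lebesgue, so $\wh{\wt\K}(\omega)\le 0$ for all $\omega>0$. Combined with nonnegativity this forces $\wh{\wt\K}\equiv0$ on $(0,\infty)$, which contradicts continuity and $\wh{\wt\K}(0)=1$. Hence some $\omega_0>0$ satisfies $\wh\K_f(\omega_0)>\wh\K_c(\omega_0)$.

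\textbf{Part 2, step 3: conclusion.} In either case $\omega_0$ depends only on the kernels and $\sigma_c$, not on $d$ or $k_d$. Set $\delta := gu_\tip B(\wh\K_f(\omega_0)-\wh\K_c(\omega_0))>0$. For $k_d<\delta/\omega_0^2$ we get $\lambda_{d_\tip}(\omega_0)>0$. Since $d\mapsto\lambda_d(\omega_0)$ is continuous on $[0,d_\tip]$ (because $u_*$, $t_0$, $t_1$ are), it stays positive at some $d_*<d_\tip$. This $d_*$ is positive by Part 1. Taking $\omega_*=\omega_0$ completes the argument.

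The main obstacle is step 2 in the heavy-tailed case, where the small-$\omega$ expansion is unavailable; the iteration/Riemann--Lebesgue argument circumvents it. A secondary point to check carefully is continuity of $u_*(d)$ up to the fold, which follows from the explicit formula with the $+$ root, since the discriminant vanishes continuously at $d_\tip$.
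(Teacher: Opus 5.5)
Your proof is correct and follows the paper's argument: stability at $d=0$ from $u_*=M$ and $\wh\K_c\ge 0$; equality of the facilitation and competition coefficients at the fold; a frequency with $\wh\K_f>\wh\K_c$ (via the second-order Taylor expansion when second moments are finite); then continuity in $d$ near $d_{\tip}$ together with small $k_d$. In addition, you fill in two steps the paper only asserts, namely deriving the coefficient equality from $F'(u_{\tip})=0$ (the paper says ``one can calculate'') and the iteration/Riemann--Lebesgue argument for the same-kernel-up-to-scaling case (the paper calls it ``immediate''), and you rightly note that Part 1 needs $k_d>0$.
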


Proposition \ref{p:gauss_suff2} states that for sufficiently small diffusion, the death rate $d$ will always generate a modulational instability, in the tipping case $f_0 > \frac{1-c}{M}$. We highlight this fact because the initial pattern onset is an important feature of vegetative modeling -- environmental harshness should organize pattern formation. We also remark that the nonlocal facilitation term is important here: if $f_\max = 0$, so that the nonlocal facilitation is not present, then the third statement would not hold -- the uniform state would always be linearly stable.

\paragraph{Bifurcations for hat-like kernels.} For hat-like kernels, the story is more complicated, and general statements are more difficult to make.
In fact, for any hat-like kernel, with sufficiently small diffusion, the uniform state is already unstable to patterns at $d = 0$. We state this formally in Propositions~\ref{p:hat_suff1}-\ref{p:hat}, which apply to hat-like kernels for which the Fourier transforms are not assumed to be non-negative.

\begin{proposition}[Bifurcation in $\sigma_c$ for Hat-like kernels]%
\label{p:hat_suff1}
   Fix $0 < d < d_\textrm{tip}$.
    \begin{enumerate}
        \item There exists $\sigma_c > 0$ small enough such that $\wh\L_{\sigma_c}(\omega) < 0$ for all $\omega$, and the uniform state is linearly stable. 
         \item For sufficiently small diffusion coefficient $k_d$, there exists $\sigma_* >1$ and $\omega_* > 0$ such that $\wh\L_{\sigma_*}(\omega_*) > 0$. When $\sigma_c = \sigma_*$, the uniform state is linearly unstable to perturbations with frequency $\omega_*$. 
    \end{enumerate}\end{proposition}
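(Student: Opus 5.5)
Throughout, write the dispersion relation \eqref{e:disp} as
\[
\wh\L_{\sigma_c}(\omega) = A\,\wh{\wt\K_f}(\omega) - B\,\wh{\wt\K_c}(\sigma_c\omega) - k_d\omega^2,
\qquad A := g u_* t_1\Big(1-\frac{u_*}{M}\Big),\quad B := \frac{g u_*(1-c+t_0)}{M}.
\]
Here I use $\sigma_f=1$ and the scaling identity $\wh\K_c(\omega)=\wh{\wt\K_c}(\sigma_c\omega)$, which follows from Assumption~\ref{a:kernels}. The key observation is that $u_*$, $t_0$ and $t_1$ depend only on $d$ and the reaction parameters, and not on $\sigma_c$. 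Hence for fixed $0<d<d_\tip$ the constants $A,B>0$ are fixed while $\sigma_c$ varies; note $u_*<M$ for $d>0$, so $A>0$. I will also use two facts. First, since $\wt\K_{c,f}$ are even with unit mass, their Fourier transforms are real, continuous, bounded by $1$ in absolute value, and equal to $1$ at the origin. Second, $\wh\L(0)=A-B<0$. This is exactly the statement from Section~\ref{s:uniform} that $u_*$ is stable to spatially homogeneous perturbations, because $\wh\L(0)$ is the derivative of the uniform reaction term at $u_*$. If needed, I would verify this directly from the fold structure for $0<d<d_\tip$.

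For part 1, I would split the frequencies at a radius $R$. Since $|\wh{\wt\K_f}|,|\wh{\wt\K_c}|\le 1$, for $|\omega|>R$ we get $\wh\L_{\sigma_c}(\omega)\le A+B-k_dR^2<0$, once $R$ is chosen with $R^2>(A+B)/k_d$; here I assume $k_d>0$. For $|\omega|\le R$, continuity of $\wh{\wt\K_c}$ at $0$ gives $\wh{\wt\K_c}(\sigma_c\omega)\ge 1-\ep$ uniformly on this compact set once $\sigma_c$ is small enough. Combined with $\wh{\wt\K_f}\le 1$, this yields
\[
\wh\L_{\sigma_c}(\omega)\le A-B+B\ep<0
\]
for $\ep<(B-A)/B$. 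This part does not use the hat-like structure at all.

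For part 2, the hat-like hypothesis supplies some $\eta_0>0$ with $\wh{\wt\K_c}(\eta_0)<0$; evenness lets me take $\eta_0>0$. By continuity and $\wh{\wt\K_f}(0)=1$, there is $\omega_*\in(0,\eta_0)$ with $\wh{\wt\K_f}(\omega_*)>0$. Set $\sigma_*:=\eta_0/\omega_*$, which is greater than $1$ because $\omega_*<\eta_0$. Then
\[
\wh\L_{\sigma_*}(\omega_*) = A\,\wh{\wt\K_f}(\omega_*) + B\,\big|\wh{\wt\K_c}(\eta_0)\big| - k_d\omega_*^2 ,
\]
and both of the first two terms are strictly positive. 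This expression is positive whenever $k_d<\big(A\wh{\wt\K_f}(\omega_*)+B|\wh{\wt\K_c}(\eta_0)|\big)/\omega_*^2$, which is the required smallness of $k_d$. The resulting threshold depends only on $d$ and the kernels, through $A$ and $B$.

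The only genuinely delicate step is confirming the strict inequality $A<B$ in part 1 for every $0<d<d_\tip$. If the earlier homogeneous-stability claim is not taken as given, I would derive it from the fact that $u_*$ is the upper branch of the fold. The reaction term $F(u)$ vanishes at $u_*$ and is negative just above it, so $F'(u_*)\le 0$. Strictness away from the fold then follows because $F'(u_*)=0$ would force a double root, which occurs only at $d=d_\tip$.
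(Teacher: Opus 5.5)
Your proof is correct. Part 1 follows the paper's argument: discard the frequencies $|\omega|>R$ where $k_d\omega^2$ dominates, then on the compact remainder use continuity of $\wh{\wt\K_c}$ at $0$ together with $\wh\L(0)=A-B<0$. You also justify $A<B$ through the simple upper root of the quadratic for $u_*$, which the paper only asserts (as $C_f-C_c<0$).

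Part 2 is where you genuinely differ. The paper reuses the proof of Proposition~\ref{p:gauss_suff1}(3). It fixes a small $\omega_0>0$ with $A\wh{\wt\K_f}(\omega_0)-k_d\omega_0^2>0$ and sends $\sigma_c\to\infty$, so that $\wh{\wt\K_c}(\sigma_c\omega_0)\to 0$ by Riemann--Lebesgue. There competition is switched off rather than exploited. No sign information on $\wh{\wt\K_c}$ is used, so the same lines cover Gaussian-type, hat-type and mixed kernel pairs (e.g.\ a hat-type $\K_f$ with a Gaussian-type $\K_c$).

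Your argument instead needs $\wh{\wt\K_c}$ itself to be negative somewhere. That is legitimate under the hat-like hypothesis, but it is an extra input. In return you get an explicit threshold $\sigma_*=\eta_0/\omega_*$, you need no decay of $\wh{\wt\K_c}$ at infinity, and you isolate the mechanism specific to hat kernels: the competition term $-B\wh{\wt\K_c}(\sigma_c\omega)$ is itself destabilizing.

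This buys more than the statement asks for. When $\K_f$ and $\K_c$ share the same profile, evaluating at $\sigma_c=1$, $\omega=\eta_0$ gives $\wh\L(\eta_0)=(B-A)|\wh{\wt\K_c}(\eta_0)|-k_d\eta_0^2>0$ for small $k_d$. By continuity in $\sigma_c$, the same instability holds for some $\sigma_c<1$. This is exactly the paper's numerical observation that hat-like kernels pattern when $\sigma_c<\sigma_f$, and the Riemann--Lebesgue argument cannot see it.

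Finally, neither route really needs $k_d$ small for the stated claim. In yours, letting $\omega_*\to 0$ (hence $\sigma_*\to\infty$) keeps $B|\wh{\wt\K_c}(\eta_0)|$ fixed while $k_d\omega_*^2\to 0$.
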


  \begin{proposition}[Bifurcation in $d$ for hat-like kernels]\label{p:hat} Let $\K_c$ be such that $\inf_\omega \wh\K_c(\omega) < 0$. Let $d = 0$. If the diffusion coefficient $k_d$ is sufficiently small, then there exists a nonzero frequency $w_*$ such that the uniform state is unstable to perturbations with frequency $\omega_*$
  \end{proposition}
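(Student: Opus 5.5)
At $d=0$ the uniform state is $u_*=M$, which we read off from the remark after the formula for the uniform states: $u_*$ equals the carrying capacity when $d=0$. We substitute this into the dispersion relation \eqref{e:disp}. The facilitation term carries the factor $t_1(1-u_*/M)$, which vanishes at $u_*=M$. The dispersion relation therefore collapses to
\begin{equation*}
\lambda(\omega) = -g\,(1-c+t_0)\,\wh\K_c(\omega) - k_d\omega^2, \qquad t_0 = \frac{f_\max M}{f_\max/f_0 + M} \ge 0 .
\end{equation*}
Since $g>0$ and $1-c+t_0>0$ (recall $0<c<1$), the sign of the first term is opposite to the sign of $\wh\K_c(\omega)$. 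So the instability question reduces to a single property of the competition kernel.

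Next we use the hypothesis $\inf_\omega \wh\K_c(\omega)<0$. It gives a frequency $\omega_*$ with $\wh\K_c(\omega_*) = -\eta$ for some $\eta>0$. This $\omega_*$ is automatically nonzero, because $\wh\K_c(0)=\int_\R \K_c = 1 >0$ by Assumption~\ref{a:kernels}. For this fixed $\omega_*$ we get
\[
\lambda(\omega_*) = g(1-c+t_0)\eta - k_d\omega_*^2 .
\]
This is positive as soon as $k_d < g(1-c+t_0)\eta/\omega_*^2$. That gives the explicit smallness threshold on the diffusion coefficient. Positivity of $\lambda(\omega_*)$ means that perturbations proportional to $\cos(\omega_* x)$ grow exponentially, so $u_*$ is linearly unstable at frequency $\omega_*$.

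The only point needing care is that the infimum might be approached only as $|\omega|\to\infty$, or be attained nowhere. This does no harm: we do not need the infimum to be attained, only some $\omega_*$ with $\wh\K_c(\omega_*)<0$, and such a point exists by the definition of the infimum. Because $\omega_*$ is fixed before $k_d$ is chosen, the growth of $\omega_*^2$ never competes with the smallness of $k_d$. I expect no real obstacle here. The one substantive observation is that the facilitation term drops out exactly at $d=0$, so the sign of $\wh\K_c$ alone decides the instability.
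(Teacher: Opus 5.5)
Your proposal is correct and follows essentially the same argument as the paper: at $d=0$ you use $u_*=M$ so the facilitation term drops out, pick $\omega_*$ with $\wh\K_c(\omega_*)<0$ (necessarily nonzero since $\wh\K_c(0)=1$), and then take $k_d$ small. Your explicit threshold $k_d < g(1-c+t_0)\eta/\omega_*^2$ simply makes the paper's ``$k_d$ small enough'' quantitative.
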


In other words, for hat-like kernels, the ratio of spatial scales still organizes pattern formation, albeit without the restriction that competition always be longer-range than facilitation -- but the death rate does not in general control the onset of patterns. It is possible for the uniform state to regain and then re-lose stability as $d$ increases, so Proposition \ref{p:hat} does not entirely preclude Turing instabilities caused by increasing $d$. However, for the two example hat-like kernels here, this takes very careful parameter tuning, and requires unphysically large diffusion or small $\sigma_c$. Instead, typically, the uniform state is either stable or unstable for the entire range of $d \in [0, d_\tip)$. In biological terms, Proposition \ref{p:hat} reflects that for sufficiently steep or sudden dropoff of competition with distance (such as for the hat kernel), patterns are preferred even in abundant conditions.

  The proofs of Propositions~\ref{p:gauss_suff1}-\ref{p:hat} are provided in the Appendix. 

We can also visualize how changes in $d$ and $\sigma_c$ allow the emergence of spatial patterns by visualizing the dispersion relation for example kernels and various parameter values. We present the dispersion relation for both a Gaussian kernel and a hat kernel in terms of changing values of the length-scale $\sigma_c$ of competition in Figure \ref{fig:disp_sd_c}, and we present analogous dispersion relations for varying death rates $d$ in Figure \ref{fig:disp_d}. These figures allow us to see that increasing $\sigma_c$, as well as increasing $d$ in the Gaussian-like case, allows for the progressive emergence of unstable wavenumbers. Figure $\ref{fig:disp_sd_c}$ also shows the strong effect of $\sigma_c$ on the most unstable wavenumber. From Figure \ref{fig:disp_d}, we note that for the hat kernel, patterns are already present in the case of zero background death rate $d = 0$. We also observe there that the spatially uniform solution $\omega = 0$ reaches neutral stability as $d$ increases to the tipping point $d_{\tip}$.

   \begin{figure}[!ht]
     \centering
     \includegraphics[width=0.4\linewidth]{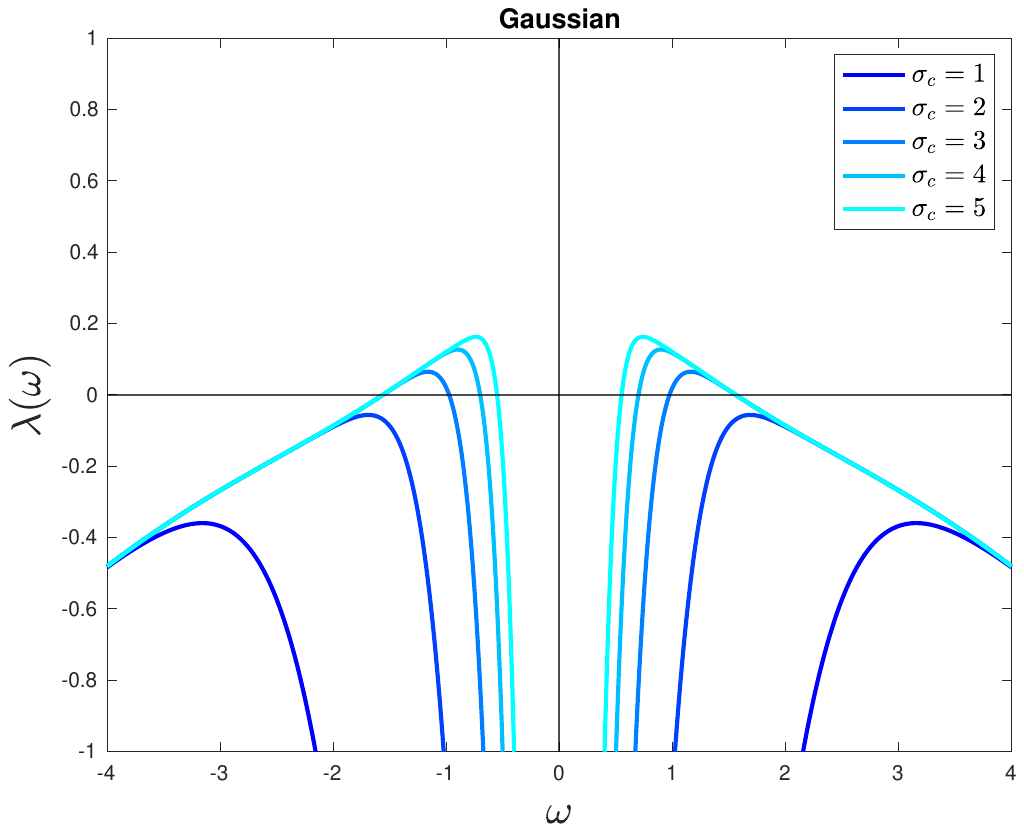}
     \includegraphics[width=0.4\linewidth]{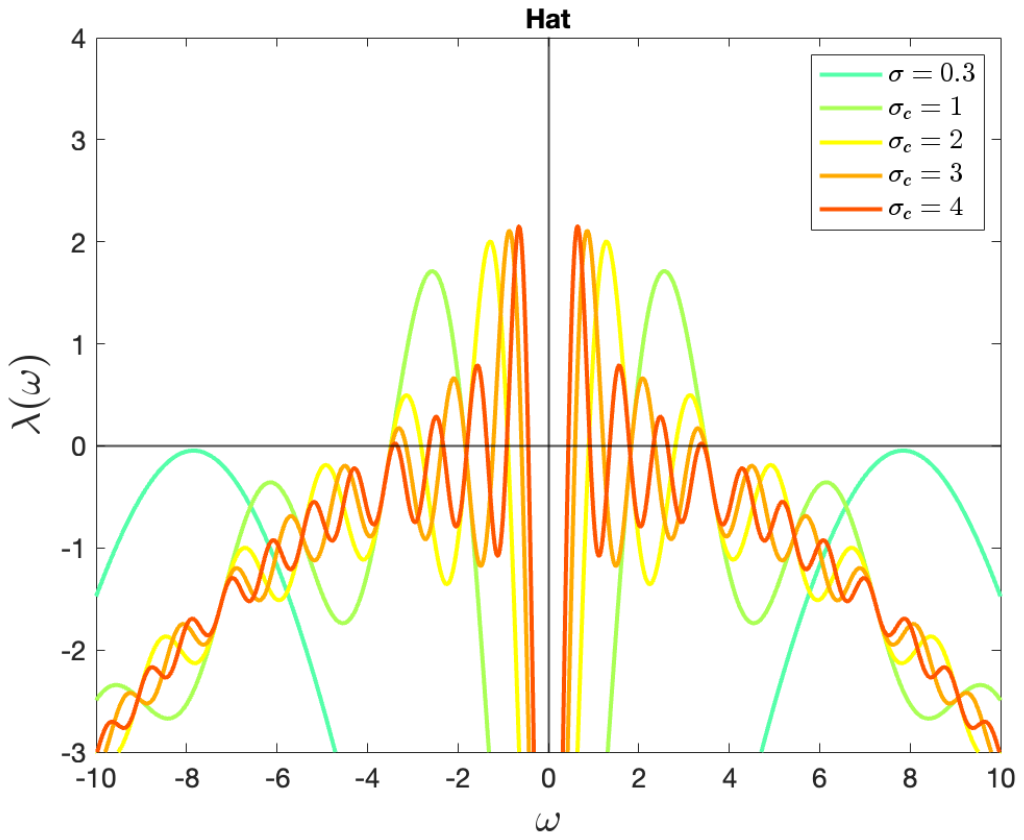}
     \caption{Plots of the dispersion relations for the Gaussian kernel (left) and hat kernel (right) for various values of $\sigma_c$, with all other parameters from Figure \ref{fig:par}.}
     \label{fig:disp_sd_c}
 \end{figure}

   \begin{figure}
     \centering
     \includegraphics[width=0.4\linewidth]{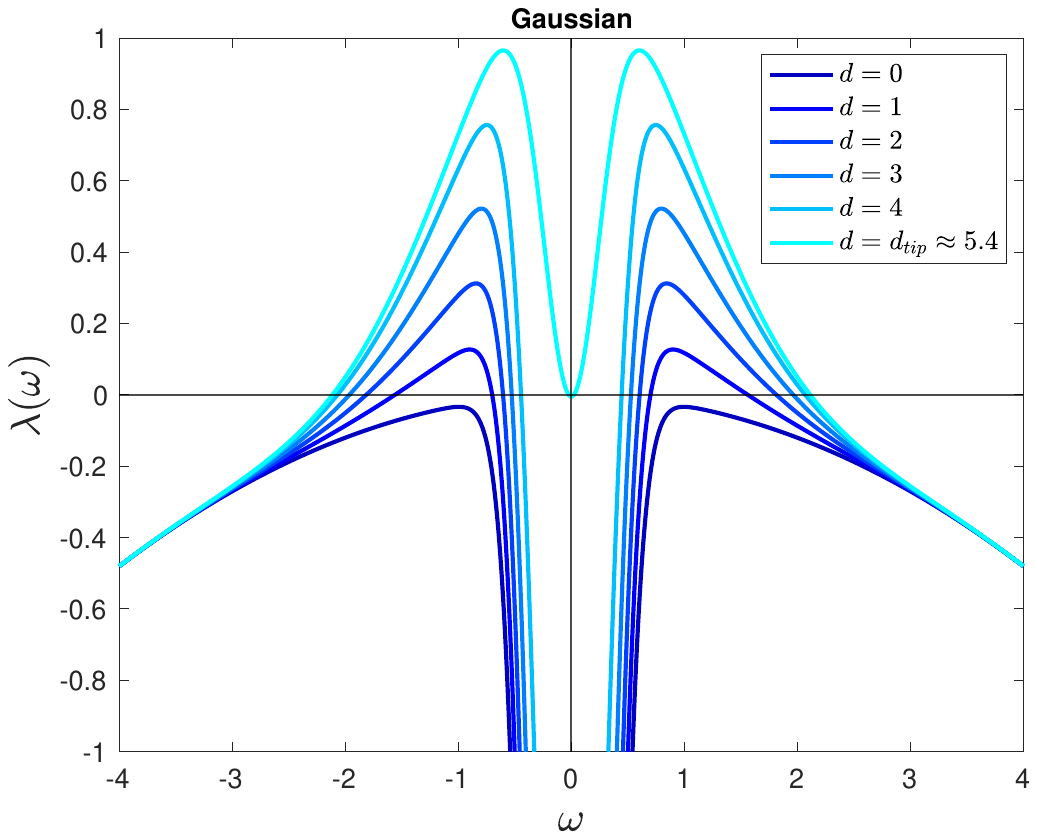}
     \includegraphics[width=0.4\linewidth]{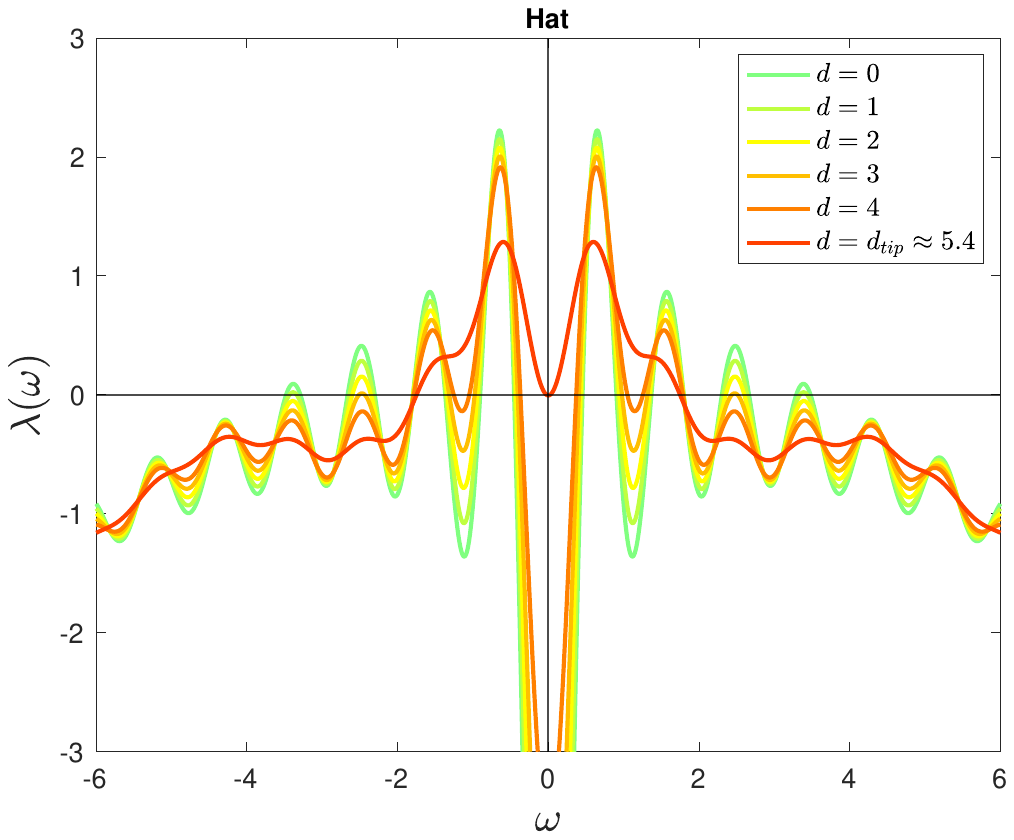}
     \caption{Plots of the dispersion relation for the Gaussian kernel (left) and hat kernel (right) for various values of $d$, with all other parameters from Figure \ref{fig:par}. Note that $\lambda(0)$ is negative except at the tipping point when it equals 0, as expected since the uniform state undergoes a fold bifurcation.}
     \label{fig:disp_d}
 \end{figure}

\subsection{Coperiodic Stability of Bifurcating Patterns}
\label{sec:coperiodicstability}

We next derive conditions for whether pattern-forming bifurcations are super- or sub-critical. In this section, we assume that the linear conditions for a Turing instability are met, and derive information about the resulting patterns near onset. 
Knowing the criticality of the pattern forming bifurcation provides important information about the patterns observed near onset. In the case of a subcritical bifurcation, small-amplitude patterns are unstable and one expects the sudden emergence of large-amplitude patterns. In the supercritical case, small-amplitude, coperiodically stable patterns can be observed. However, the region of small-amplitude patterns may be quite small in parameter space, so that in practice the patterns can quickly become large-amplitude as the parameter increases. 

A center manifold analysis is used to derive a scalar equation that governs the evolution of small-amplitude solutions. The scalar equation has the normal form of a pitchfork bifurcation, and the sign of the cubic coefficient, which we call $A_3$, then determines whether the bifurcation is sub- or super-critical. This method provides rigorous existence and (coperiodic) stability results for patterns of sufficiently small amplitude, and thus determines whether patterns are expected to be small-amplitude deviations of the uniform state, or some other large-amplitude structure.

The section is structured as follows: first, we state hypotheses on the linear equation for pattern formation. Next, assuming these hypotheses, we show that a center manifold exists. Then, by finding an expansion for the manifold, we derive the scalar equation governing small-amplitude solutions to determine the criticality of the bifurcation. After the derivation, we provide examples of how the results translate into observed patterns at onset for various kernels. Throughout, we use $\mu$ as the bifurcation parameter, standing in for either of $\sigma_c$ or $d$, and we write a subscript $\mu$ on the linearization $\L_\mu$ and its Fourier transform $\wh\L_\mu$ to highlight the explicit dependence on the parameter.

\begin{hypothesis}[Assumptions on Linearization at Uniform State]\label{h:linearization}
   Let $\mu \in \{\sigma_c, d\}$, and let $u \equiv u_*$ be a positive spatially uniform solution of \eqref{e:main}. We assume there exists $\mu_* >0$ such that
   \begin{enumerate}
       \item The linearization $\wh \L_\mu$ has $\wh \L_\mu(\omega) < 0$ for all $\omega$ if $\mu < \mu_*$;
       \item For $\mu = \mu_*$, there exists $\omega_* >0$ such that $\wh\L_{\mu_*}(\pm\omega_*) = 0$ and $\frac{d}{d\omega}\wh\L_{\mu_*}(\pm \omega_*) = 0$ (quadratic tangency);
       \item $\wh\L_{\mu_*}(k\omega_*) \neq 0 \quad \forall \ k \in \N$; (non-resonance condition)
       \item $\frac{\partial}{\partial \mu}\wh\L_\mu(\pm\omega_*)\Big|_{\mu = \mu_*} \neq 0$. 
   \end{enumerate}
\end{hypothesis}

We note that Propositions \ref{p:gauss_suff1}-\ref{p:gauss_suff2} above imply that as long as the coefficient of diffusion is sufficiently small, then for any Gaussian-like kernels, all elements of Hypothesis \ref{h:linearization} other than the non-resonance condition hold. The non-resonance condition is a technical condition that restricts patterns on the center manifold to only one wavelength; when it is not satisfied, patterns are still expected; the center manifold is just higher-dimensional and more complicated to analyze. %
We also remark that for any specific choice of kernels and values of parameters, Hypothesis \ref{h:linearization} is easy to verify.

\begin{theorem}\label{t:main}
 Assume Hypothesis \ref{h:linearization}. Then equation \eqref{e:main} undergoes a pattern-forming bifurcation in $\mu$. For $|\mu - \mu_*|$ sufficiently  small, if the coefficient $A_3$ defined in \eqref{e:A3} is negative, the resulting periodic patterns are neutrally stable to co-periodic perturbations. 
\end{theorem}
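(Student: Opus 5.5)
We would prove Theorem \ref{t:main} by a center manifold reduction in a space of even, $2\pi/\omega_*$-periodic functions, followed by a direct computation of the cubic coefficient of the reduced equation. Set $u = u_* + v$ and restrict to $X = L^2_{\rm per,even}(0,2\pi/\omega_*)$ with domain $H^2_{\rm per,even}$. Translation invariance is removed by evenness, and the equation is preserved because the kernels are even. We write the equation as $v_t = \L_\mu v + N_\mu(v)$. Here $N_\mu$ collects the quadratic and higher terms obtained by expanding the saturating facilitation with the Taylor coefficients $t_0,\dots,t_3$ from \eqref{e:tdef}, together with the product with $(1-\K_c*u/M)$. Convolution with an $L^1$ kernel is bounded on $X$, and $X\cap L^\infty$-type Sobolev spaces ($H^1$ suffices in one dimension) form algebras, so $N_\mu$ is smooth, $C^k$ for every $k$, from $H^1$ to $H^1$ near $0$ and satisfies $N_\mu(v)=O(\|v\|^2)$. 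On the Fourier basis $\cos(k\omega_* x)$, $\L_\mu$ acts diagonally with eigenvalues $\wh\L_\mu(k\omega_*)$, $k\ge 0$.

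By Hypothesis \ref{h:linearization}(1)--(3), at $\mu=\mu_*$ the only neutral mode is $\cos(\omega_* x)$. Every other eigenvalue is strictly negative, and since $\wh\L_\mu(k\omega_*)\le C - k_d k^2\omega_*^2$ these eigenvalues are uniformly bounded away from $0$. The $k=0$ mode is also nonzero because $\mu_*$ lies away from $d_\tip$. So $\L_{\mu_*}$ generates an analytic semigroup with a one-dimensional center space and a spectral gap. We then invoke a standard parameter-dependent center manifold theorem, for instance Haragus--Iooss, appending $\dot\mu=0$. This gives a locally invariant, locally attracting manifold $v = a\cos(\omega_*x) + \Psi(a,\mu)$ with $\Psi = O(|a|^2 + |a||\mu-\mu_*|)$, and reduced equation
\[
\dot a = \wh\L_\mu(\omega_*)\,a + A_2(\mu)a^2 + A_3 a^3 + O(|a|^4 + |a|^2|\mu-\mu_*|).
\]

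Next we compute the expansion. The quadratic terms in $N$ produce only the modes $\cos 0$ and $\cos 2\omega_* x$, so projecting onto $\cos(\omega_* x)$ gives $A_2=0$. The reduced equation is therefore odd to leading order. In fact it is exactly odd, because the shift $x\mapsto x+\pi/\omega_*$ acts as $a\mapsto -a$ on the center space. We solve for the second-order part of the manifold, $\Psi_2 = a^2\big(\psi_0 + \psi_2\cos(2\omega_*x)\big)$, with $\psi_0 = -\wh\L_{\mu_*}(0)^{-1}\hat N_{2,0}$ and $\psi_2 = -\wh\L_{\mu_*}(2\omega_*)^{-1}\hat N_{2,2}$; these inverses exist by non-resonance. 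We then project the quadratic interaction of $\cos(\omega_* x)$ with $\Psi_2$, together with the cubic terms of $N$, onto $\cos(\omega_* x)$. This defines $A_3$ as in \eqref{e:A3}, expressed through $t_1,t_2,t_3$, $u_*$, $M$ and the values $\wh\K_{f,c}(0),\wh\K_{f,c}(\omega_*),\wh\K_{f,c}(2\omega_*)$. Hypothesis (4) gives $\wh\L_\mu(\omega_*) = \beta(\mu-\mu_*) + O((\mu-\mu_*)^2)$ with $\beta\ne0$. The reduced equation is then a pitchfork normal form, and its nontrivial equilibria $a^2 \approx -\beta(\mu-\mu_*)/A_3$ give the bifurcating periodic patterns.

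For stability, when $A_3<0$ the branch exists for $\beta(\mu-\mu_*)>0$. The linearization of the reduced equation there is $-2\beta(\mu-\mu_*)+\dots<0$. Because the manifold is attracting in the even space and the other modes are strictly stable, the pattern is stable in $X$. In the full co-periodic space (not restricted to even functions) there is an additional zero eigenvalue generated by translation, $\partial_x$ of the pattern. This is why the conclusion is \emph{neutral} stability. To obtain it we would repeat the reduction in the full periodic space with the two-dimensional center space $\{\cos,\sin\}$ and use $O(2)$-equivariance: the reduced system is $\dot z = z(\wh\L_\mu(\omega_*) + A_3|z|^2+\dots)$, and the circle of equilibria has one zero eigenvalue and one negative radial eigenvalue. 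We expect the main obstacle to be the bookkeeping in computing $A_3$, specifically getting the correct Fourier factors from convolutions of products such as $\K_c*(\cos^2)$. The functional-analytic setup is the other point that needs checking, namely smoothness of $N$ on $H^1$ when the kernels are only $C^1$ or $L^1$; we would handle it by treating convolutions as bounded multipliers.
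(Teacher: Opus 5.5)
Your proposal is correct and follows essentially the same route as the paper. Both arguments use a center manifold reduction in $H^2_{\mathrm{per},\mathrm{even}}$ with center space spanned by $\cos(\omega_* x)$, second-order manifold terms at modes $0$ and $2\omega_*$ obtained by inverting $\wh\L_{\mu_*}(0)$ and $\wh\L_{\mu_*}(2\omega_*)$ (your $\psi_0,\psi_2$ are the paper's $c_0,c_2$), projection onto $\cos(\omega_* x)$ to extract $A_3$, and the resulting pitchfork normal form.

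Your extra $O(2)$-equivariant reduction in the full periodic space goes slightly beyond the paper. The paper's even-space argument on its own only gives stability against even perturbations. Your step is what accounts for the translational zero eigenvalue behind the word ``neutrally'' in the statement.
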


\subsection{Proof of Theorem \ref{t:main}}\label{ss:CM}
\label{sec:centermanifoldproof}

We now give the proof of Theorem \ref{t:main}. We start by showing existence of a center manifold, then by expanding to find the reduced dynamics.

\paragraph{Justification of center manifold existence}

We consider the equation 

\begin{equation}\label{e:main2}
    \dfrac{\partial u}{\partial t} = g u \left(1-c  + \frac{f_\textrm{max}   \K_f * u}{f_\textrm{max}/f_0 +   \K_{f} * u}\right)\left( 1-\dfrac{\K_{c}*u}{M}\right) - du + k_d \dfrac{\partial^{2} u}{\partial x^2}.
\end{equation}
as an evolution equation on the space $H^2_{\textrm{per},\textrm{even}}([-\frac{L}{2},\frac{L}{2}],\R)$, with $L = \frac{2 \pi}{\omega}$. 

Recall the definitions of $t_0, t_1$ from $\eqref{e:tdef}$. The linearization of \eqref{e:main} at $u \equiv u_*$ is given by 
\begin{equation}
\L_\mu(u) = k_d\frac{\partial^2u}{\partial x ^2} + gu_*t_1(1-u_*/M)\K_f*u - \frac{gu_*}{M}(1-c+t_0)\K_c*u. 
\end{equation}

\sloppy{We have that the negative of $\L_* := \L_{\mu_*}$, considered as a densely defined operator on $L^2_{\textrm{per},\textrm{even}}([-\frac{L}{2},\frac{L}{2}],\R)$, is a sectorial operator, because the negative of $\frac{\partial^2}{\partial x ^2} $ is sectorial and the other terms are compact, hence bounded. In addition, the spectrum of $\L_*$ contains one isolated neutral $0$ eigenvalue and only stable eigenvalues otherwise. Lastly, the nonlinear terms, which are compositions of linear convolution maps with superposition operators, are smooth in a neighborhood of $u \equiv u_*$.  Then, by Theorem 6.2.1 of [Henry, '82], a center manifold exists, tangent to $\ker\L_*$, containing all small bounded solutions to \eqref{e:main} in $H^2_{\textrm{per},\textrm{even}}([-\frac{L}{2},\frac{L}{2}],\R)$. Further, since the nonlinearity is smooth, this manifold can be expanded in a neighborhood of $u \equiv u_*$. We give expansions for this manifold, showing existence and stability of equilibria within $H^2_{\textrm{per},\textrm{even}}([-\frac{L}{2},\frac{L}{2}],\R)$.} 

\paragraph{Center Subspace.}

Recall the dispersion relation $\lambda(\omega)$ from \eqref{e:disp} above:
\begin{equation}
    \lambda(\omega) = g u_*\left( t_1 \left(1-\frac{u_*}{M}\right)\wh\K_f(\omega) - \frac{(1-c+t_0)}{M}\wh\K_c(\omega)\right) - k_d\omega^2.
\end{equation}
 If $\lambda$ is negative for all $\omega$, then the uniform state is (spectrally) stable. By the first assumption in Hypothesis \ref{h:linearization}, when the parameter $\mu$ is less than the critical value $\mu_*$, this is the case. When $\mu = \mu_*$, there is a frequency $\omega_*$ such that $\lambda(\omega_*) = 0$, by the second assumption in Hypothesis \ref{h:linearization}. Then, when $\mu$ increases just past the critical value $\mu_*$, $\lambda(\omega_*)$ becomes positive, by the last assumption in Hypothesis \ref{h:linearization}. In other words, as the parameter $\mu$ increases past the critical value $\mu = \mu_*$, the uniform state becomes unstable to periodic perturbations of frequency $\omega_*$. The non-resonance condition in Hypothesis \ref{h:linearization} guarantees that $\omega_*$ is the only such frequency, within the class of coperiodic perturbations. So one sees that the conditions in Hypothesis \ref{h:linearization} are exactly the linear conditions for a pattern-forming bifurcation. 

When $\mu = \mu_*$, the set of perturbations with $\lambda = 0$ is spanned by $\{\cos(\omega_* x), \sin(\omega_* x) \}$, the kernel of $\L_*$. We reduce this to a one-dimensional set spanned by $\{\cos(\omega_* x)\}$ by restricting to even functions, effectively getting rid of the translation symmetry in the equation. The set $\{a \cos(\omega_*x) \ | \ a \in \R \}$, or the center subspace, parameterizes the steady-state solutions to the linear equation $u_t = \L_* u$ for $\mu = \mu_*$. 

\paragraph{Center Manifold Coefficients.}
Center manifold theory says that for $\mu$ close to $\mu_*$, any small bounded solution of the nonlinear equation can be written as

\[
u_{CM} = a(t) \cos(\omega_*x) + u^\perp(x,a,\widetilde\mu),
\]

where the notation $u^\perp$ refers to the fact that $\langle u^\perp, \cos(\omega_*x)\rangle = 0$ and $\wt{\mu}:= \mu - \mu_*$. Importantly, we have that $u^\perp = \mathcal{O}(a^2)$, and that $u^\perp$ can be expanded in $\wt\mu$ and $a$. This means that the evolution of $u_{CM}$ can be understood entirely by the dynamics of the scalar quantity $a(t)$.
We first use the fact that $u_{CM}$ solves the nonlinear equation to solve for $u^\perp(x,a,\widetilde\mu)$ at the quadratic orders $a^2, a \wt \mu, \wt \mu^2$. 
We write
\begin{equation}\label{e:CMexp}
\begin{split}
    u_{CM} &= a\cos(\omega_*x) + u^\perp(x, a,\wt{\mu}) \\
    &= a\cos(\omega_*x) + u_{20}(x)a^2+ u_{11}(x)a\wt{\mu}+ u_{02}(x)\wt{\mu}^2 + \sum_{j + k \ge3}u_{jk}(x)a^j\wt{\mu}^k,
\end{split}
\end{equation}
where $u^\perp$  is in $ \{\cos(\omega_*x)  \}^\perp$ at every order of $a$ and $\wt \mu$. % and $\wt{\mu} := \mu-\mu_*$. 
Plugging in the expression for $u_{CM}$ into \eqref{e:main}, we obtain 

\begin{equation}\label{e:invCM}
\begin{split}
    &a'(t)\cos(\omega_* x) + 2aa'(t)u_{20}(x)+a'(t)\wt\mu u_{11}(x) + a'(t)(\mathcal{O}(a^2+\mu^2)) = \L_{\mu}(u_{CM}) + N(u_{CM}) \\
    & \quad = (\L_{\mu}-\L_*)(a\cos(\omega_*x)) + \L_*\left(u_{20}(x)a^2+ u_{11}(x)a\wt{\mu}+ u_{02}(x)\wt{\mu}^2\right) + N(a\cos(w_*x)) \\
    & \qquad+ \ \mathcal{O}(a^3, a^2\tilde{\mu}, a\tilde{\mu}^2, \tilde{\mu}^3).
    \end{split}
\end{equation}
We solve for $u_{20}(x),u_{11}(x),u_{02}(x)$ by using invariance; %In order 
to find $u_{20}(x)$, we solve \eqref{e:invCM} %the above 
at order $a^2$. Importantly, we know that $a'(t)$ must be at least $\mathcal{O}(a^2, a\wt\mu)$, because on the linear level when $\mu = \mu_*$, we have $a' = 0$. Then each term on the left hand side of \eqref{e:invCM} is at least cubic order. Inspecting, we find that the only terms of order $a^2$ in \eqref{e:invCM} come from $\L_{*}u_{20}(x)$ and the quadratic terms in the nonlinearity applied to $a\cos(\omega_* x)$. So we need to solve 
\[
-\L_* a^2u_{20}(x) = N(a\cos(w_*x))
\]
at order $a^2$, where $N(\cdot)$ represents the nonlinear terms from \eqref{e:main}. We find $u_{20}(x) = c_0 +c_2\cos(2\omega_*x), $ with 
\begin{align*}
    c_0 &= -\frac{1}{4\wh{\L_*}(0)}N_2(\omega_*), \qquad 
    c_2 = -\frac{1}{4\wh{\L_*}(2\omega_*)}N_2(\omega_*),\\
    N_2(\omega_*)  &= 2g\left(-\frac{1 - c + t_0}{M}\wh{\K_c}(\omega_*) + t_1\left(\left(1 - \frac{u_*}{M}\right) - 
  \frac{ u_*}{M} \wh{\K_c}(\omega_*)\right)\wh{\K_f}(\omega_*)+
   u_*t_2\left(1 - \frac{u_*}{M}\right)\wh{\K_f}(\omega_*)^2\right).
\end{align*}

Next, we solve for $u_{11}(x)$. Plugging $u = a\cos(\omega_*x) + a^2u_{20}(x) + a\wt{\mu}u_{11}(x)$ into \eqref{e:main} results only in $a \wt{\mu}$ terms of the form $a\wt{\mu}\cos(\omega_*x)$, which vanish when projected onto $\{\cos(\omega_*x)  \}^\perp$. Therefore we have that $u_{11}(x) \equiv 0$. Similarly, we find $u_{02}(x) \equiv 0$ since there are no terms in \eqref{e:main} of order $\wt{\mu}^2$ only, both when the parameter $\mu$ represents $\sigma_c$ and when $\mu$ represents $d$.

Thus, we find 
\begin{equation*}
  u^\perp(x,a,\wt{\mu}) = \left( c_0 + c_2 \cos\left( 2 \omega_* x \right) \right)a^2 + \mathcal{O}\left(a^3, a^2 \wt{\mu}, a \wt{\mu}^2, \wt{\mu}^3\right).
\end{equation*}
\paragraph{Reduced Flow on the Center Manifold.}

Having calculated $u^\perp$ to quadratic order, we can find the dynamics for $a(t)$ by inserting $u_{CM}$ into \eqref{e:main}, and projecting the entire equation onto $\cos(\omega_*x)$. After projecting, we obtain
\begin{equation}
a'(t)\cos(\omega_*x) = (\L_{\mu}-\L_*)(a\cos(\omega_*x))  + \mathcal{P} N\left(a\cos(\omega_*x) + a^2(c_0 +c_2\cos(2\omega_*x)) + \mathcal{O}(3)\right),
\end{equation}
where $\mathcal{P}$ is projection onto $\cos(\omega_*x)$. On the right hand side, the first term gives us terms of the form $\cos(\omega_*x)$ when expanding $\L_{\mu}(a\cos(\omega_*x))$ in terms of $\mu$. From the second term, we find terms of the form $\cos(\omega_*x)$ firstly from cubic terms in the nonlinearlity applied to $a\cos(\omega_*x)$, and secondly from quadratic terms applied to the product of $a\cos(\omega_*x)$ and  $c_0 + c_2\cos(2\omega_*x)$. Helpfully, the convolution terms are only multipliers when applied to cosines, so the calculation is not very different than if the nonlinearity were a polynomial.

We find the following reduced dynamics for $a$: 
\begin{equation}\label{e:cm_reduced}
    \frac{da}{dt}= A_1\wt \mu a + A_3 a^3 + \mathcal{O}(a^4).
\end{equation}

We have 
\begin{equation}\label{e:A3}
A_3 = c_0C_2^0 + \frac{1}{2}c_2C_2^2 + \frac{3}{4}C_3,
\end{equation}
where 
\begin{align*}
    C_2^0 &= g\Big(2t_2u_*\left(1 - \frac{u_*}{M}\right)\wh{\K_f}\left(\omega_*\right)
+ t_1\left(1 - \frac{u_*}{M}\right)\left(\wh{\K_f}\left(\omega_*\right) + 1\right)  \\
& \qquad - \frac{t_1u_*}{M}\left(\wh{\K_f}\left(\omega_*\right) + \wh{\K_c}\left(\omega_*\right)\right) 
    - \frac{1 - c + t_0}{M}\left(\wh{\K_c}\left(\omega_*\right) + 1\right) \Big) \\
 C_2^2 &= g\Bigg( 2t_2u_*\left(1 - \frac{u_*}{M}\right)\wh{\K_f}\left(\omega_*\right)\wh{\K_f}\left(2 \omega_*\right)
 +t_1\left(1 - \frac{u_*}{M}\right)\left(\wh{\K_f}\left(\omega_*\right) + \wh{\K_f}\left(2 \omega_*\right)\right)  \\ 
& \qquad - \frac{t_1u_*}{M}\left(\wh{\K_f}\left(\omega_*\right)\wh{\K_c}\left(2 \omega_*\right) + \wh{\K_c}\left(\omega_*\right)\wh{\K_f}\left(2 \omega_*\right)\right) - \frac{1 - c + t_0}{M}\left(\wh{\K_c}\left(\omega_*\right) + \wh{\K_c}\left(2 \omega_*\right)\right) 
\Bigg) \\
 C_3 &= g\Bigg(t_3 u_* \left(1 - \frac{u_*}{M}\right)\wh{\K_f}\left(\omega_*\right)^3 + t_2\left(1 - \frac{u_*}{M}\right)
     \wh{\K_f}\left(\omega_*\right)^2 \\
     &\qquad - \frac{t_2 u_*}{M}\wh{\K_c}\left(\omega_*\right)\wh{\K_f}\left(\omega_*\right)^2 - \frac{t_1}{M}
     \wh{\K_c}(\omega_*)\wh{\K_f}\left(\omega_*\right)\Bigg)
\end{align*}
and 
\begin{equation}\label{e:A1} A_1 = \frac{d(\wh{\L_\mu} (\omega))}{d\mu}\Big|_{\omega = \omega_*,\mu = \mu_*} = \frac{\partial(\wh{\L_\mu} (\omega))}{\partial \mu}\Big|_{\omega = \omega_*,\wt \mu = \mu_*} + \frac{\partial(\wh{\L_\mu} (\omega))}{\partial u_* }\frac{\partial u_*}{\partial \mu}\Big|_{\omega = \omega_*,\mu = \mu_*}. \end{equation}
Note that by the last assumption in Hypothesis \ref{h:linearization}, $A_1 \neq 0$. 
\paragraph{Existence and Coperiodic Stability of Bifurcating Patterns.}
From the reduced equation \eqref{e:cm_reduced}, we obtain a unique steady-state $a = a(\wt\mu)$ for $\wt\mu >0$ sufficiently small, by the implicit function theorem. This equilibrium is stable if $A_3 < 0$. We therefore obtain a bifurcating patterned steady state to the full equation \eqref{e:main}, which is coperiodically stable if $A_3 < 0$.

This concludes the proof of Theorem 1. 

\begin{remark}
    We note that the calculations in this section are essentially identical to those in a formal weakly-nonlinear expansion, an approach with which some readers may be more familiar. In a formal weakly-nonlinear analysis, a small parameter $\varepsilon$ is typically introduced, and both the leading-order perturbation $a\cos(\omega_*x)$ and the bifurcation parameter $\wt \mu$ (standing in for either $\sigma_c-\sigma_*$ or $d-d_*$) are scaled in terms of $\varepsilon$. In particular, this approach typically features a formal asymptotic expansion of the bifurcation parameter and time-scales, which uses the boundary conditions and solvability conditions to deduce the appropriate scaling in terms of $\varepsilon$ and to determine the appropriate amplitude equation for the pattern-forming bifurcation \cite{woolley2022boundary}. Here, our scaling is derived later, from the reduced equation, but it turns out to be identical. The coefficient $a(t)$ in \eqref{e:CMexp} and the bifurcation parameter $\wt \mu$ are treated here as separate small quantities, and their relative scaling emerges from the final reduced equation. 
We find the reduced equation by first writing the solution $u$ as an expansion in $a$ and in the bifurcation parameter $\wt\mu$, then inserting this expanded solution into the differential equation, and lastly projecting onto the subspace spanned by $\cos(\omega_*x)$. This last step can be identified with the `resonance' step in a weakly-nonlinear analysis where only terms of the critical frequency are kept. The reason we use center manifold theory here is that small-amplitude solutions to the reduced equation \eqref{e:cm_reduced} correspond rigorously to solutions of the full equation \eqref{e:main}. 

\end{remark}

\begin{figure}
    \includegraphics[width=0.48\textwidth]{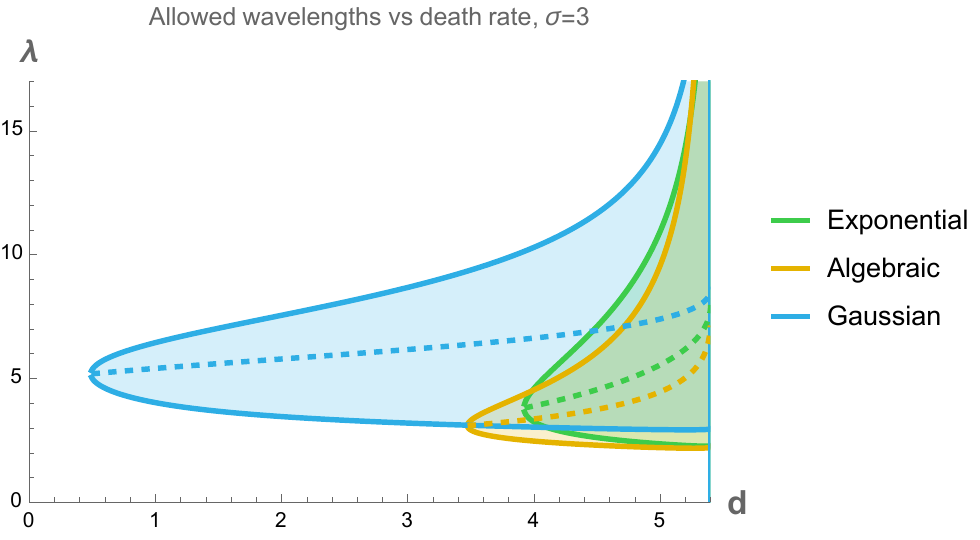}
     \includegraphics[width=0.48\textwidth]{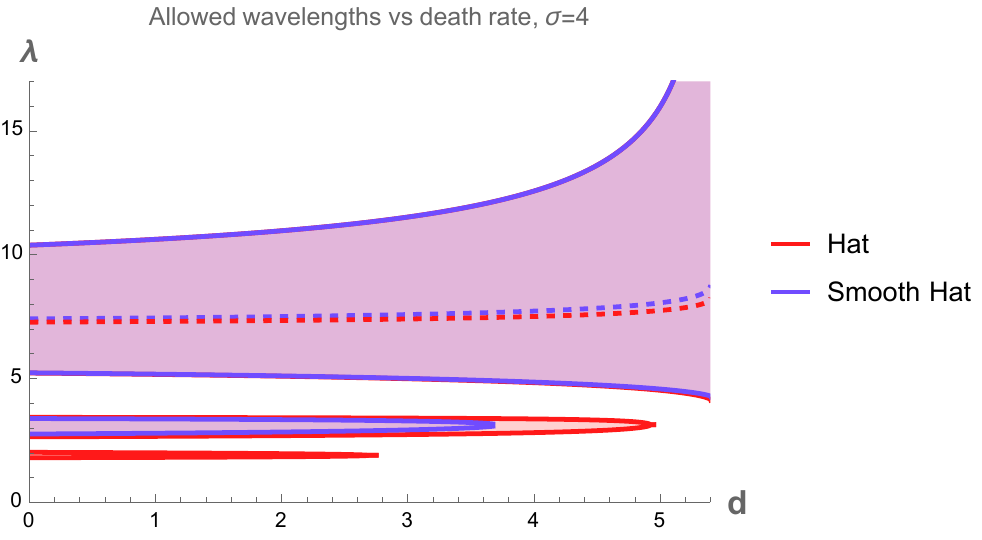}
     \caption{Visualization of wavelengths to which the uniform state is unstable for various $d$. Here, $\sigma_c$ is fixed at $\sigma_c = 3$, and all other parameters are as in Figure \ref{fig:par}. }\label{fig:allowed_wavelengths}
\end{figure}

\subsection{Results}
\label{sec:stabilityresults}

We computed thresholds and wavelengths at pattern onset, as well as coefficients $A_1$ and $A_3$, for the various kernels in Section \ref{ss:kernels}. Pattern onset for the Gaussian-like kernels was computed by fixing $\sigma_c$, then solving for $(d_*,\omega_*)$ such that $\wh{\L}_{d_*}(\omega_*) = \frac{d}{d\omega}\wh{\L}_{d_*}|_{\omega = \omega_*} = 0$, with $\omega_*$ the unique positive maximum of $\wh{\L}_{d_*}(\omega)$. The coefficients $A_1, A_3$ were then computed from \eqref{e:A1},\eqref{e:A3} in the previous section. For the hat and smooth hat kernels, the same strategy was used, but with $\sigma_c$ as used as the bifurcation parameter, for fixed values of $d$. 

Figure~\ref{fig:SDc_vs_d} shows regions of $d$, $\sigma_c$ parameter space that support patterns versus no patterns for the various kernels. Specifically, patterns are expected for parameters $\sigma_c$, $d$ above the corresponding curves. A bifurcation corresponds to the intersection of a vertical line (for a bifurcation in $\sigma_c$) or a horizontal line (for a bifurcation in $d$) with the boundary of the region of patterns. For the exponential, algebraic, and Gaussian kernels, one sees in the figure that increasing either $\sigma_c$ or $d$ can trigger pattern formation. Close to the tipping point, the boundary of the region of patterns approaches $\sigma_c = 1$ for all kernels. We also see from the right panel that in this region where a Turing bifurcation occurs very close to the tipping point, the wavelength $\lambda$ of the bifurcating patterns gets very large ($\omega_* \to 0)$. Examples of unstable wavelengths $\lambda$ as a function of the linear death rate $d$, for a fixed value $\sigma_c = 3$, are provided in Figure~\ref{fig:allowed_wavelengths}. Among Gaussian-like kernels, we find qualitatively similar trends in the bifurcation onset, but pattern onset and wavelength varies significantly depending on the kernel. 

\begin{figure}[!ht]
    \centering
    \includegraphics[width=0.6\linewidth]{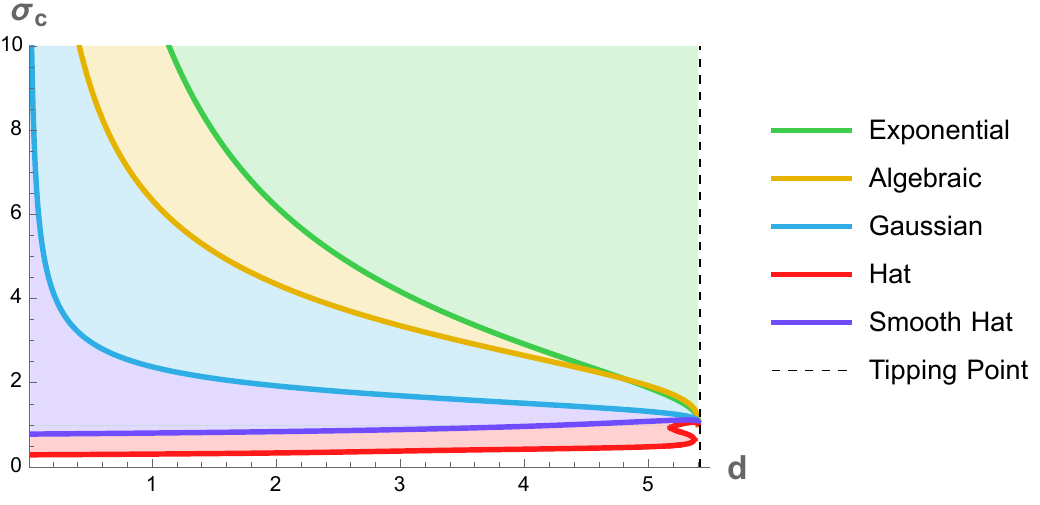}
    \includegraphics[width=0.38\linewidth]{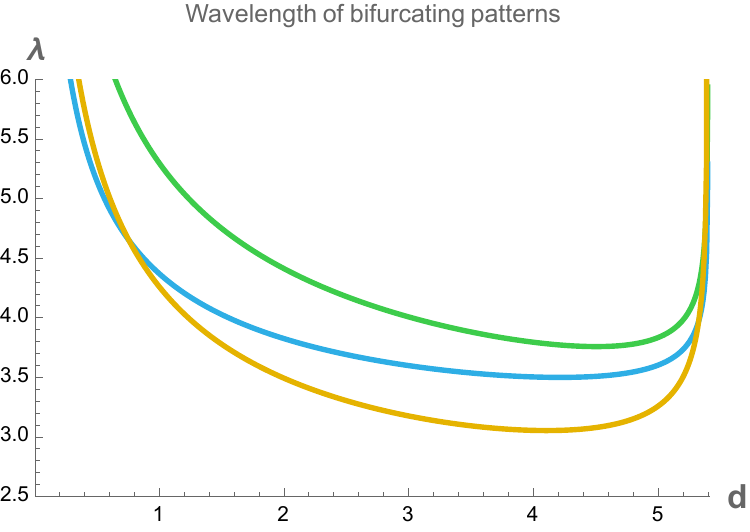}]
  \caption{(Left) Regions of patterns versus no patterns for various kernels in $(\sigma_c,d)$ parameter space. Patterns are expected for each kernel in the region of parameter space above the corresponding curve. (Right) Wavelengths of the bifurcating patterns at each $d$ value for the Gaussian-like example kernels, with $\sigma_c$ at the value on the corresponding curve in the left panel, and all other parameters from Figure \ref{fig:par}.   }
    \label{fig:SDc_vs_d}
\end{figure}

For hat-like kernels, as mentioned above, pattern-forming bifurcations in the parameter $d$ are not guaranteed. Indeed, Figures~\ref{fig:A3} and \ref{fig:allowed_wavelengths} demonstrate that patterns are expected in the case of hat kernels when $d=0$. Another peculiar feature of hat-like kernels is that although bifurcations in $\sigma_c$ are more robust, they can, and often do, happen for ratios of $\frac{\sigma_c}{\sigma_f}$ less than $ 1$. This represents a scenario outside a typical Turing-type regime, where the spatial scale of competition is smaller than the length scale of facilitation. Patterns are still expected as $\sigma_c$ is increased past $\sigma_f$, but appear before the $\frac{\sigma_c}{\sigma_f}$ ratio exceeds 1, different from the Gaussian-like kernels. 

For all the example kernels, we find that pattern-forming bifurcations are supercritical ($A_3 < 0$) except for a very small parameter region near the tipping point; see Figure \ref{fig:A3}. The dashed and solid lines in the left panel indicate supercritical and subcritical bifurcations, respectively. The right panel shows computed values of $A_3$ as a function of $d$, at the bifurcation points $(d,\sigma_c)$ from the curve on the left panel. This figure shows the bifurcation points and associated $A_3$ values with the standard parameters from Figure \ref{fig:par}, however, we found similar results with other values of these parameters, as long as the system remained in the tipping regime. 

\begin{figure}
    \centering
      \includegraphics[width=0.38\linewidth]{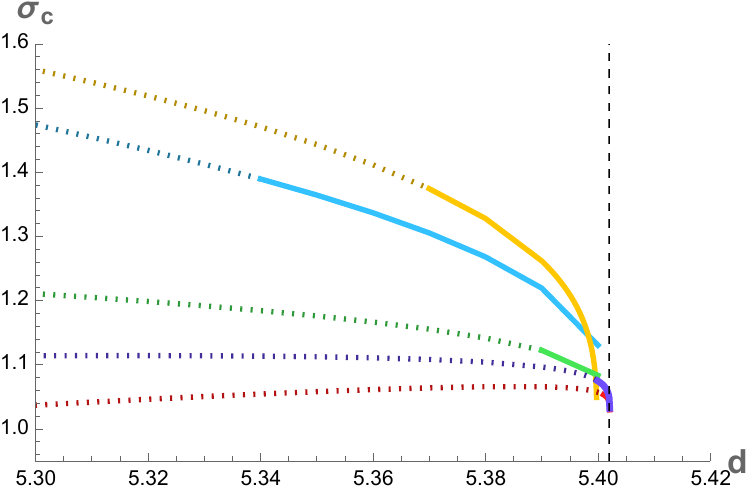} 
    \includegraphics[width=0.6\linewidth]{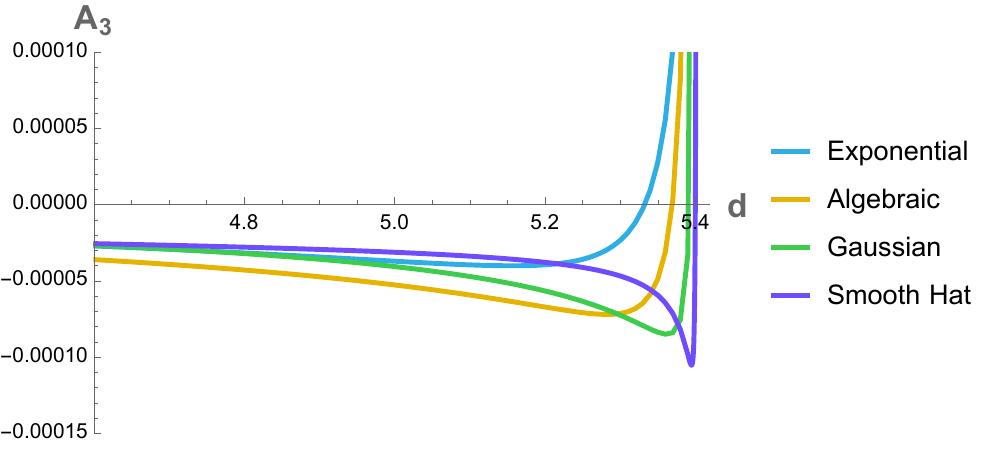}
      \caption{Although bifurcations are supercritical in much of parameter space, they appear to change criticality in a very small parameter region near the tipping point. Shown here is a parameter region near the tipping point where bifurcations are subcritical. All parameters other than $\sigma_c, d$ are as in Figure \ref{fig:par}. (b) coefficients $A_3$. (a) Corresponding bifurcation points, with subcritical bifurcations highlighted as solid lines, and supercritical in dotted lines.}
    \label{fig:A3}
\end{figure}

\section{Patterns far from onset} \label{s:far_from_onset}

Lastly, we consider the selection, stability, and properties of periodic patterns far from pattern onset. As before, we particularly note differences in pattern formation and persistence that arise due to the choice of the competition and facilitation kernels. 

First, we consider periodic patterns that appear on the fixed and finite domain $x \in \left[-\frac{L}{2},\frac{L}{2}\right], \ L <\infty$ with periodic boundary conditions. Following convention, we define the periodic traveling wave coordinate $z = \kappa x - c t$  where $\kappa = 1/L$, $c$ is the temporal frequency or wave speed, and thus $z \in [-1,1]$. Therefore, the desired patterned states are non-trivial, spatially periodic, and stationary in time ($c=0$) solutions of equation~(\ref{e:main}) of the form $u(z,t) = \bar{u}(z)$ where $\bar{u}(z + \Lambda) = \bar{u}(z)$ for some spatial period $\Lambda>0$. This finite domain setting requires that the number of spatial periods $N$ satisfies $\frac{L}{\Lambda} = N\in \mathbb{N}$, which thereby restricts the observed pattern wavenumbers. The spatially homogeneous solutions are trivially spatially periodic, stationary in time solutions of (\ref{e:main}) as well. We numerically compute solutions of~(\ref{e:main}) with root-finding techniques in Matlab and the pattern properties are probed via numerical continuation. A description of the numerical methods used to computed solutions and test their stability are included in the Appendix.

{\bf Patterns far from onset: Gaussian-like kernels.} Properties of the periodic patterns and spatially homogeneous solutions are shown in Figures~\ref{fig:gaussian_max_pattern}-\ref{fig:periodic_L1}. We first describe the results for the Gaussian kernel using the death rate $d$ as the bifurcation parameter, then compare with results obtained for all considered kernels. 
Figure~\ref{fig:gaussian_max_pattern}a depicts the spatially homogeneous and patterned solutions computed on the periodic domain with $L=100$. %
Consistent with the above center manifold analysis, spatial patterns appear as the positive spatially homogeneous solution destabilizes when $d$ is increased beyond $d_* \approx 0.225$. The first stable pattern to be observed has the smallest wavelength (highest $N$). Numerical computation confirms that the pattern emerges as a supercritical bifurcation from the homogeneous solution 
(see Figure~\ref{supfig:supercritical_evidence} in the Appendix). Further increases in $d$ lead to the existence of patterns with longer spatial wavelengths and increasing maximum values. We denote by $d_p$ the largest value of $d$ for which a stable patterned state exists. For a large range of $d$, there are multiple stable wavenumbers; Figure~\ref{fig:gaussian_max_pattern}b provides three examples of stable patterns (solid curves) and one unstable pattern (dashed curve) at $d=5$. Patterns with the longest wavelength remain stable for the highest values of $d$. Numerical continuation of the the unstable patterns suggests that the unstable patterns may originate from  the transcritical bifurcation of the homogeneous state. For small wavenumbers (low $N$), the unstable portion of the continuation curve experiences a saddle node bifurcation and the curve doubles back to higher values of $d$. In this portion of the curve, the unstable patterns are double-peaked, as shown in the dashed unstable periodic pattern in Figure~\ref{fig:gaussian_max_pattern}b.  

\begin{figure}
    \centering
    \includegraphics[width=1\linewidth]{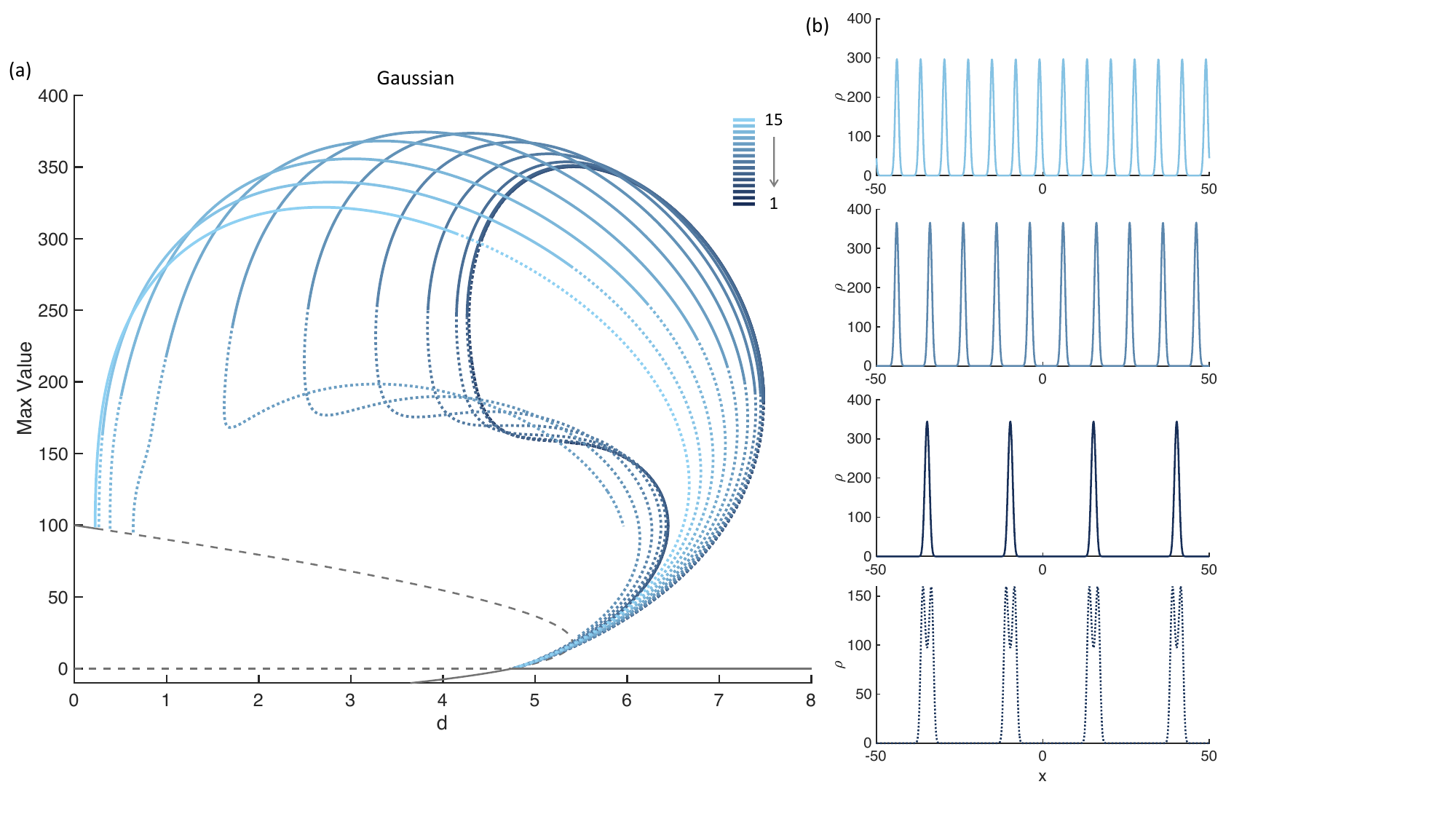}
    \caption{(a) Maximum value ($L^\infty$ norm) of solutions as a function of $d$ for the Gaussian kernel, with patterns computed on the periodic domain with $L=100$. Solid curves indicate stable solutions and dashed are unstable solutions. Patterned solutions are in blue and homogeneous solutions in gray. Legend indicates the number of spatial periods in the periodic pattern. (b) Example stable (solid) and unstable (dashed) periodic patterns at $d=5$.}
    \label{fig:gaussian_max_pattern}
\end{figure}

Periodic patterns states exist and are stable for $d \in (d_*, d_p)$, with $d_p \approx 7.48$ for the conditions shown in Figure~\ref{fig:gaussian_max_pattern}a. For $d \in (d_*, d_\tip)$, the patterned states are the only stable solutions. However, for $d \in (d_\tip, d_p)$ we observe bistability between patterned states and the trivial solution. Hence, this model displays the ``Turing before tipping'' phenomenon that has been investigated in ecological applications (see, for example, \cite{siteur2014beyond,rietkerk2021evasion} and references therein). The persistence of the patterned solutions for $d>d_{T}$ indicates that populations can avoid extinction for high death rates through spatial patterning. 

\begin{figure}
    \centering
    \includegraphics[width=1\linewidth]{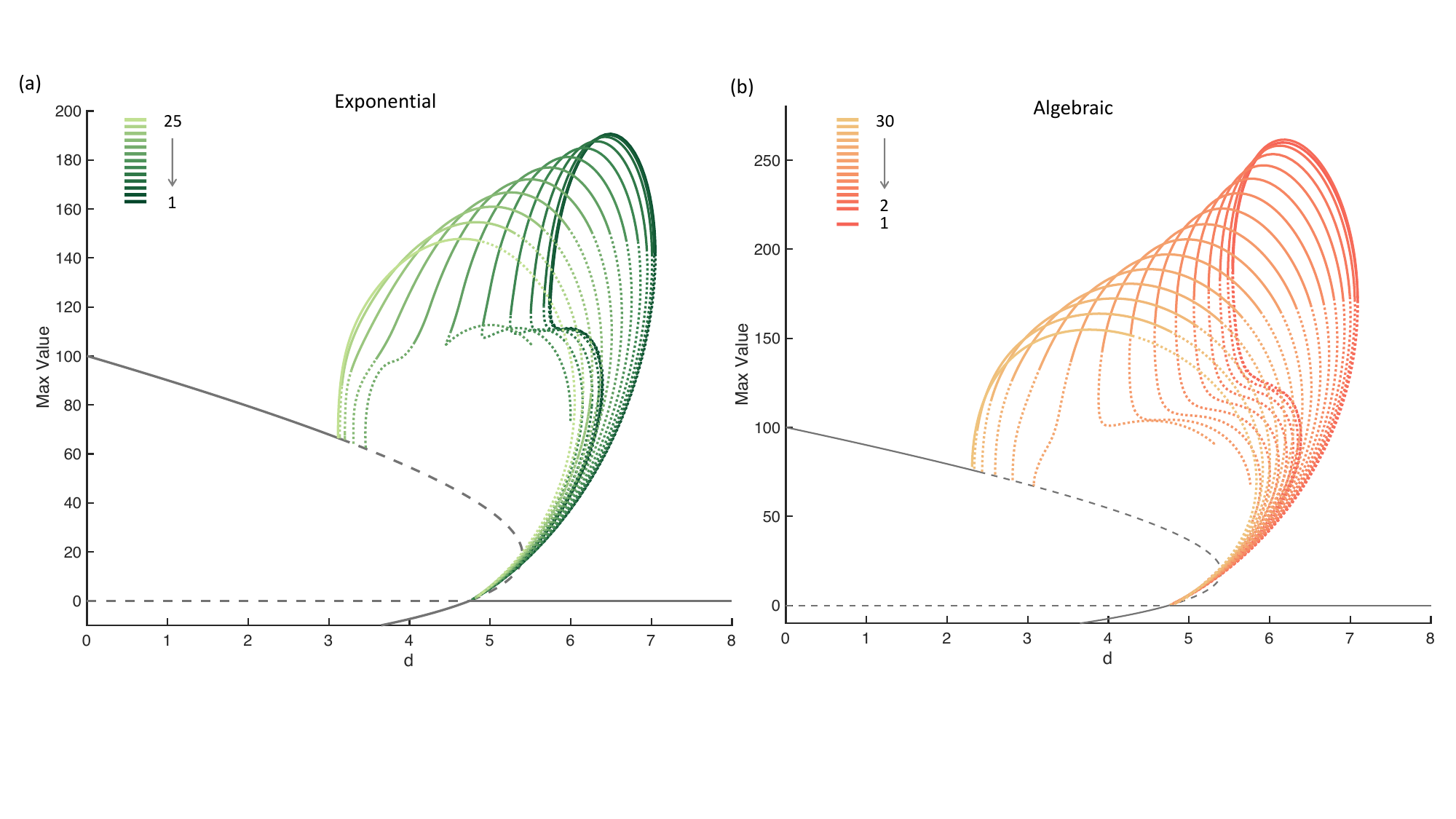}
    \caption{Maximum value of spatially homogeneous equilibrium and periodic patterns as a function of $d$ for the (a) exponential and (b) algebraic kernels. Solid curves indicate stable solutions and dashed are unstable solutions. The legends indicate $N$, the number of spatial periods in the patterned state.  All computations performed on the periodic domain with $L=100$.}
    \label{fig:expo_alg_max_pattern}
\end{figure}

For exponential and algebraic kernels, which fall under the class of ``Gaussian-like'' kernels (see Section \ref{ss:kernels}), the qualitative features of the bifurcation diagram (Figure~\ref{fig:expo_alg_max_pattern}) remain the same. Patterned states emerge upon the destabilization of the positive spatially homogeneous solution, experience multi-stability in the wavenumbers, and coexist with the extinction state. However, the quantitative values of the bifurcation parameters, including $d_*$, and the stable wavenumbers are quite kernel dependent. Patterns first appear at a higher value $d_* \approx 3.11$ for the exponential kernel and have a shorter wavelength (higher $N$) with lower maximum value compared to the Gaussian kernel. Patterned states in the exponential kernel persist until $d_p \approx 7.04$. Similar trends are seen for the algebraic kernel, with $d_* \approx 2.31$ and $d_p \approx 7.09$.

Due to the large range of observed wavenumbers for the exponential and algebraic kernels, the diagrams display a subset of the possible patterns that exist on the finite domain. Curves in the exponential kernel diagram (Figure~\ref{fig:expo_alg_max_pattern}a) represent patterns with odd values of $N\in\{1,\dots,25\}$, and curves in the algebraic diagram (Figure~\ref{fig:expo_alg_max_pattern}(b)) represent values of $N \in \{2,\dots,30\}$ as well as the single wavelength state of $N=1$. In all cases, the pattern that bifurcates from the positive spatially homogeneous solution has the highest value of $N$; patterned solutions with higher values of $N$ are not seen numerically. We note that many of the large wavelength (low $N$) patterned solutions have very similar maximum values and $d$-existence intervals, hence these curves are nearly overlapping in the bifurcation diagrams of Figures~\ref{fig:gaussian_max_pattern}-\ref{fig:expo_alg_max_pattern}.

\begin{figure}
    \centering
\includegraphics[width=1\linewidth]{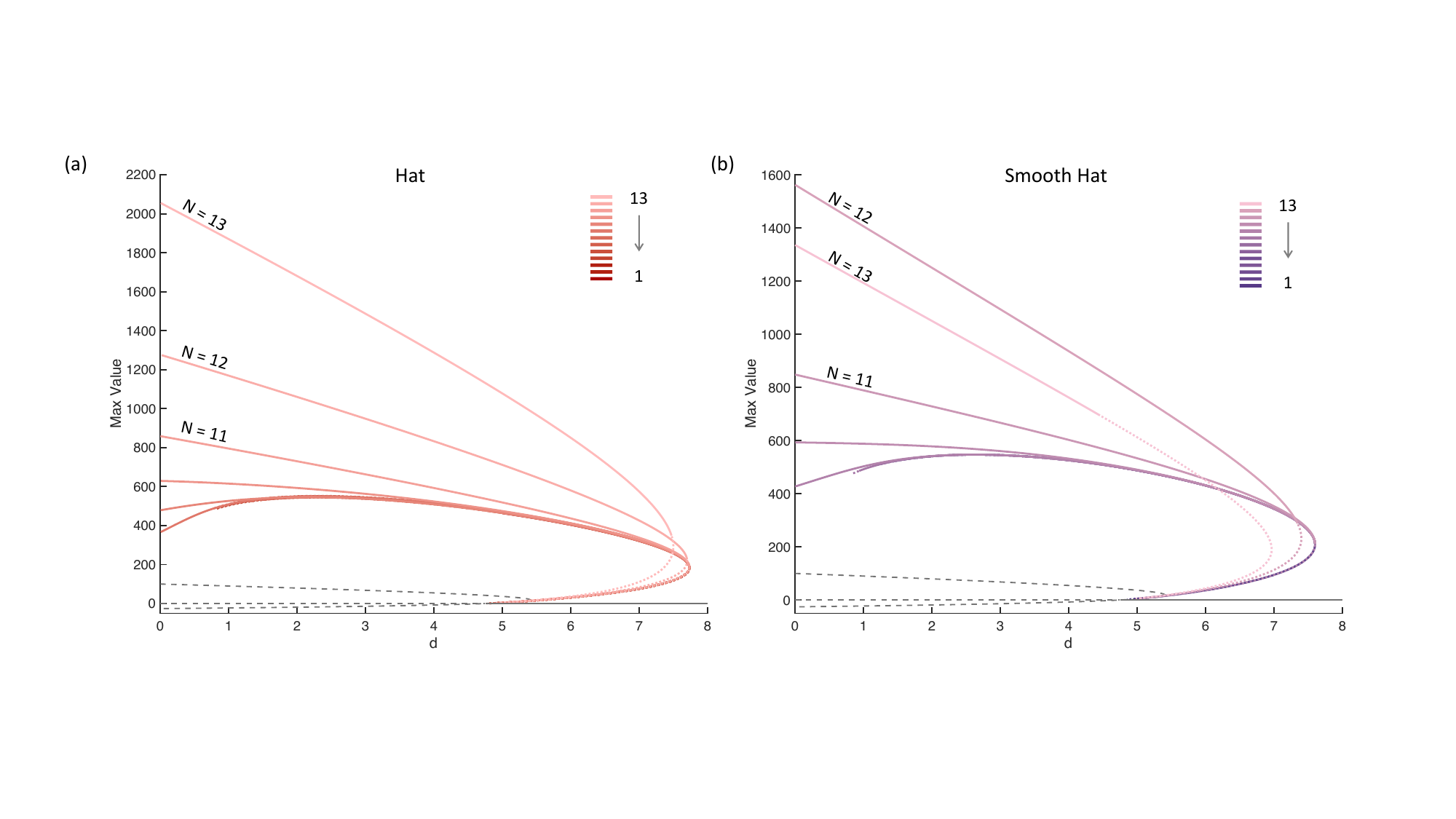}
    \caption{Maximum value of spatially homogeneous equilibrium and periodic patterns as a function of $d$ for the (a) hat and (b) smooth hat kernels. Solid curves indicate stable solutions and dashed are unstable solutions. The legends indicate $N$, the number of spatial periods in the patterned state; labels added to select curves for additional emphasis.  All computations performed on the periodic domain with $L=100$. The curves for $N\leq8$ and largely overlapping in the bifurcation diagrams.  }
    \label{fig:hats_max_pattern}
\end{figure}

{\bf Patterns far from onset: hat-like kernels.} The bifurcation diagrams for patterns computed using hat and smooth hat kernels are shown in Figure~\ref{fig:hats_max_pattern}. Again, the solid curves are stable solutions and dashed are unstable solutions; legends indicate the number of full wavelengths $N$ in the patterned solution. 

For the hat and smooth hat kernels, periodic solutions exhibit qualitatively different existence and stability trends compared to the Gaussian-like kernels. First, for both the hat and smooth hat kernels, multiple periodic solutions exist and are stable for low death rates, including at $d=0$; the positive homogeneous solution is unstable for all positive values of $d$. Hence, the only stable positive solutions are patterned states. Second, we observe that the hat kernel produces patterns with longer wavelength and significantly higher maximum values compared to the Gaussian-like kernels. In figure~\ref{fig:hats_max_pattern}a, we see that the maximum values tend to decrease with the number of spatial periods $N$ for the hat kernel, when $N$ is large. 
%; the labels on the curves are added to help differentiate the colors. 
Patterns for the smooth hat kernel display the same trends (Figure~\ref{fig:hats_max_pattern}b), except for the case of $N=13$, whose maximum value is lower than $N=12$. We note that the maximum values of patterns with wavenumbers N $\lesssim 8$ are largely equal for all values of $d$, resulting in overlapping curves in the bifurcation diagrams in Figure~\ref{fig:hats_max_pattern}. The consistent maximum values in the hat kernels may be due to the abrupt spatial extents of these kernels -- competition between peaks may become negligible once peaks are sufficiently far apart. 

Finally, the large majority of periodic patterns destabilize at the same, high values of $d_p \approx 7.73$ (hat) and $d_p \approx 7.6$ (smooth hat). The location of the instability suggests that patterns in the hat kernels may destabilize for increasing $d$ via a saddle node bifurcation \cite{rademacher2006}. The solutions with $N=12$ and $N=13$ deviate from this trend and these solutions destabilize at lower values of $N$.

\begin{figure}
    \centering
\includegraphics[width=0.75\linewidth]{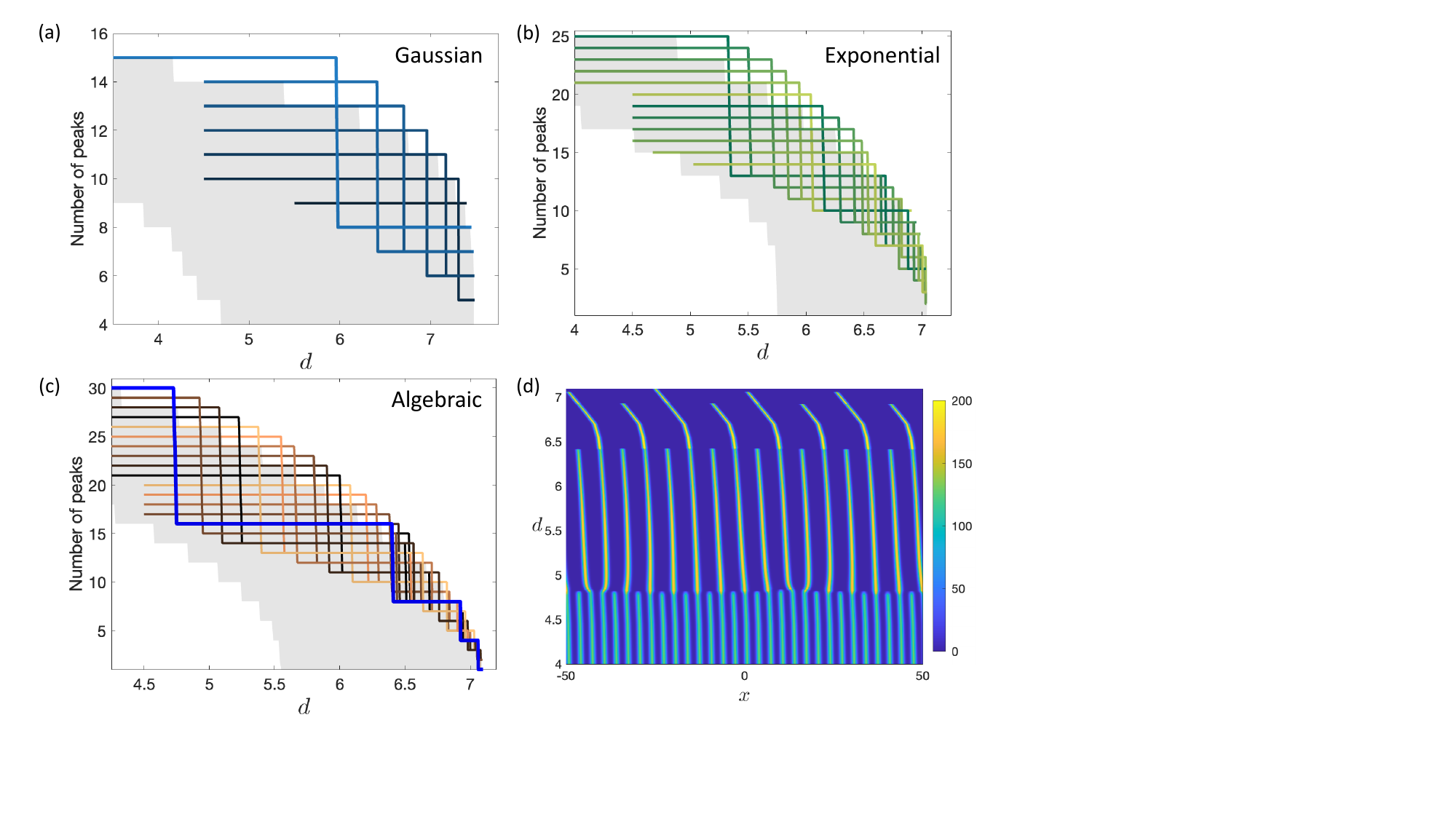}
    \caption{Top row, and bottom left: sample paths through the Busse balloon, where data is from direct simulations with $d$ slowly increasing. Plotted are the number of peaks vs the value of $d$ for the hat and smooth hat kernels. Bottom right: space-time plot of one of the direct simulations, whose path is bolded and colored blue in the bottom left (algebraic kernel) Busse balloon plot. In the spacetime plot, the value of $d$ is plotted in place of the time axis.}
    \label{fig:bouncing_BB}
\end{figure}

{\bf Traveling through the Busse balloon -- slowly increasing the death rate $d$.} 

A noted phenomenon for dryland vegetation patterns is that increased environmental harshness is associated with increased pattern wavelengths \cite{rietkerk2004self,gowda2016assessing}. The stability information in Figures \ref{fig:gaussian_max_pattern} and \ref{fig:expo_alg_max_pattern} aligns with this trend -- for Gaussian-like kernels, the stable patterns increase in wavelength as $d$ is increased. In contrast, the majority of patterns for the hat and smooth hat kernels remain stable until the saddle node is reached. This suggests that wavelength transitions (or lack thereof), which one would expect to see as $d$ is increased slowly, could be quite different between the kernels. To investigate this directly, we include observations from direct simulations where $d$ was gradually increased over time. 

The results are in Figures~\ref{fig:bouncing_BB}a--c and \ref{fig:bouncing_BB_hats} for Gaussian-like and hat-like kernels, respectively. The shaded regions in the figures are coarse approximations of the Busse balloons in each case. Traditionally, Busse balloons depict the stable wavenumbers as a function of some parameter, for patterns posed on the real line, so that the accessible wavenumbers take on all real values in an interval. Here, the shaded region 
%represents the range of the number of spatial periods in the stable patterns as a function of $d$; it 
has been constructed from the numerical continuation results (Figures~\ref{fig:gaussian_max_pattern}--\ref{fig:hats_max_pattern}) and reflects the accessible wavenumbers on a domain with $L=100.$ 
Within Figures \ref{fig:bouncing_BB} and \ref{fig:bouncing_BB_hats}, each curve indicates the path through the Busse balloon taken by a single simulation, for an initial condition yielding a stable number of peaks. 
We see that all patterns become extinct once a high enough value of $d$ is reached -- however, behavior prior to pattern extinction differs for the Gaussian-like and hat-like kernels. 

For the Gaussian-like kernels (Figure~\ref{fig:bouncing_BB}), patterns indeed transition to longer wavelength solutions before extinction. We also see the typical delay phenomenon where peaks disappear slightly after the solution loses stability \cite{asch2025slow}. 
When the wavelength transitions, there appears to be a preference for half of the peaks to disappear, with every other peak lost. The pattern behavior in Figure~\ref{fig:bouncing_BB}d exhibits this period-doubling trend. This selection suggests that the instability is of the spatial period-doubling type, although a deeper investigation is warranted. One possible exception is that patterns with low wavenumbers, $N \lesssim 8$ (Fig. \ref{fig:gaussian_max_pattern}a), $N \lesssim 6$ (Fig. \ref{fig:gaussian_max_pattern}b), $N \lesssim 4$  (Fig. \ref{fig:gaussian_max_pattern}c), tended to be abruptly lost.
It is unclear to us whether this is due to the instability changing type, as can happen near the edge of Busse balloons \cite{doelman2012hopf}, or whether the remaining range of $d$ that supports stable wavelengths is so small that the rate of increase of $d$ would have to be very carefully tuned. 

\begin{figure}
    \centering
    \includegraphics[width=0.75\linewidth]{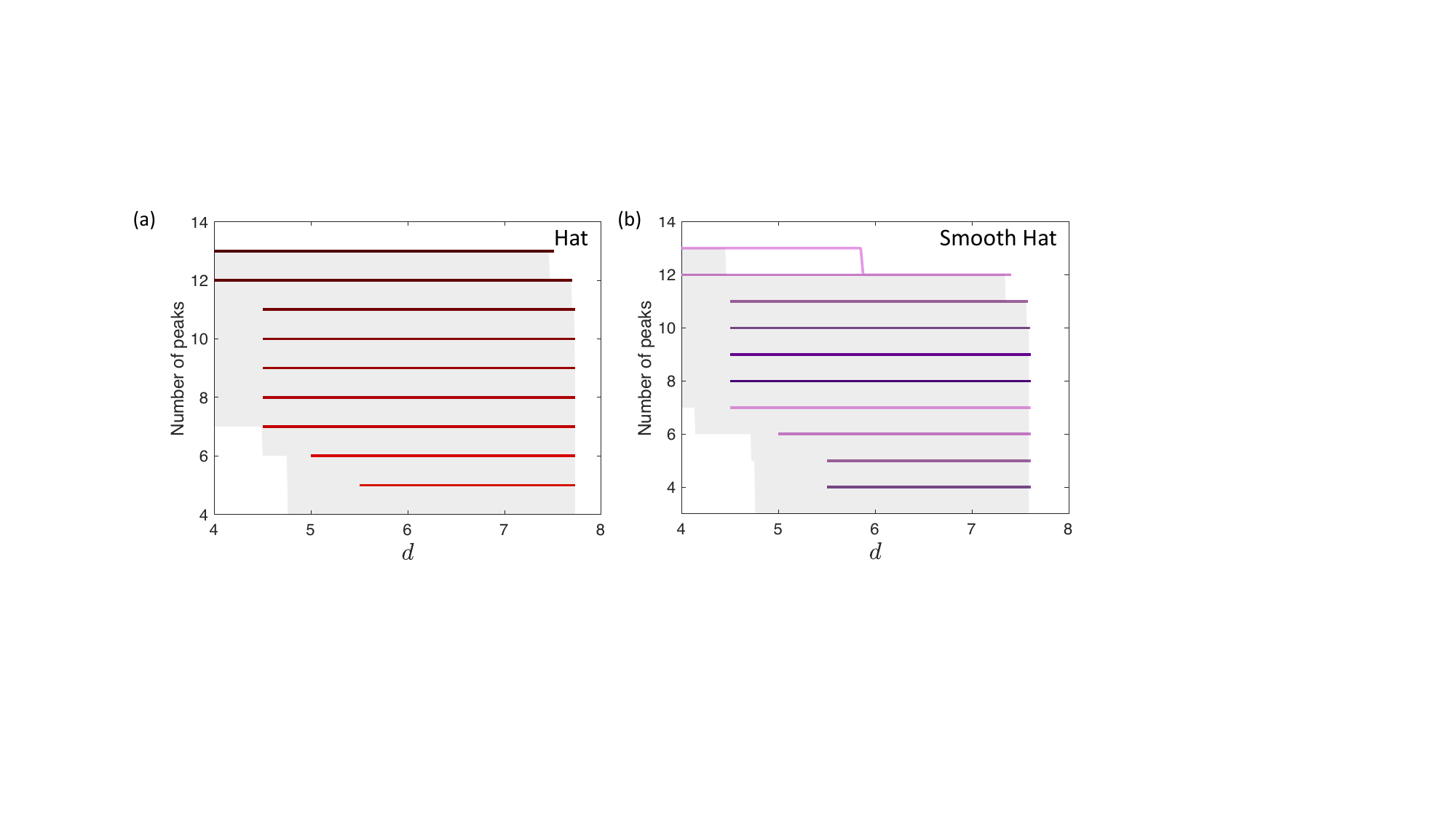}
    \caption{Sample paths through the Busse balloon for the hat and smooth hat kernels, where data is from direct simulations with $d$ slowly increasing. Plotted are the number of peaks as a function of $d$.}
    \label{fig:bouncing_BB_hats}
\end{figure}

For the hat and smooth hat kernels, these wavelength transitions are notably absent. % 
That is, we observe no change in the number of peaks as $d$ increases; the exception is the pattern in the smooth hat that begins with $N=13$ peaks. Hence, in cases that are well approximated by a hat-like kernel, this model predicts there will be no warning signs prior to full extinction.

{\bf Patterns and population productivity.} Finally, Figure \ref{fig:periodic_L1} shows the $L_1$ norms of the homogeneous and patterned solutions.  %
As expected, we find that the $L_1$ norms decrease as conditions harshen (increasing $d$). More interestingly, the patterned states tend to have a higher $L_1$ norm than the spatially homogeneous states -- patterned states support a higher total population than a spatially uniform state would. 
This suggests that patterned states are advantageous for the population, in addition to enabling survival past $d_\tip$. 
This trend is consistent across the tested kernels, but is most apparent for the hat and smooth hat kernels.

\begin{figure}
    \centering
    \includegraphics[width=1\linewidth]{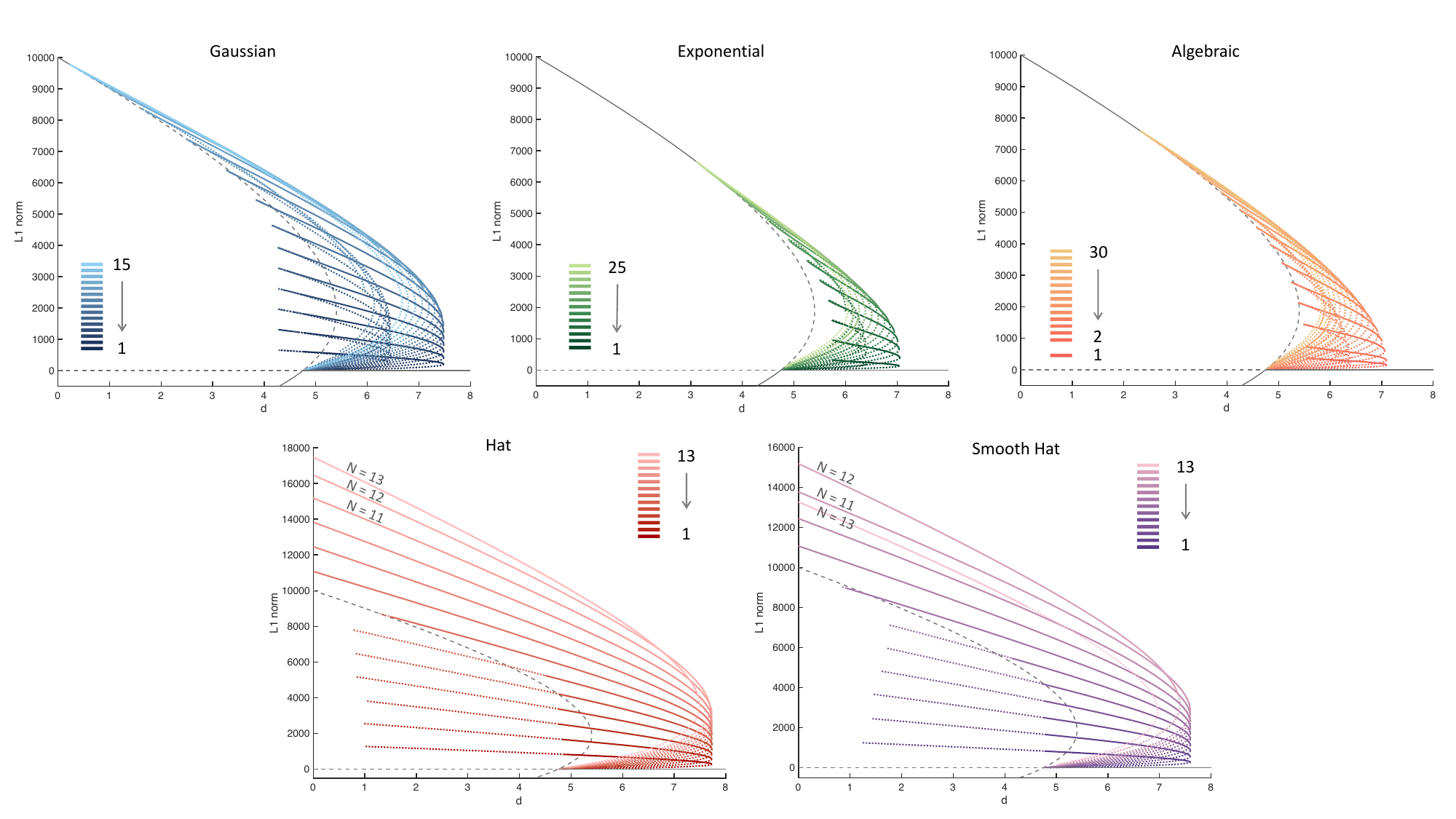}
    \caption{$L_1$-norm of spatially homogeneous equilibrium and periodic patterns as a function of $d$ for Gaussian, exponential,  algebraic, hat, and smooth hat kernels. Solid curves indicate stable solutions and dashed are unstable. Periodic patterns computed on a domain of length $L=100$. The legends indicate the number of spatial periods in the periodic pattern. }
    \label{fig:periodic_L1}
\end{figure}

\section{Discussion}\label{s:discuss}

The above results describe the formation and persistence of patterns in a model with nonlocal competition and facilitation, with emphasis on the role of kernel selection therein. Particular interest was on patterns that bifurcated from a positive uniform stead state upon changes to the relative spatial scale of competition to facilitation and the linear death rate. Our results demonstrate that kernel selection does play an integral role in pattern formation and persistence under worsening conditions. In particular, the same model with various kernels resulted in qualitatively different outcomes for pattern resilience, providing a cautionary example to researchers aiming to make real-world predictions of nonlocal pattern forming systems. 

In the following discussion, we discuss implications of these results, and further directions of study.

\paragraph{Supercritical vs subcritical pattern forming bifurcations.}
Whether pattern-forming bifurcations are super- or subcritical differs across vegetation models in the literature, and there is no clear understand of which model features lead to the different bifurcations. For all parameters explored, we found that pattern-forming bifurcations were supercritical other than a very narrow range of parameter values near the tipping point (Figure~\ref{fig:A3}). Similarly, pattern-forming bifurcations are typically supercritical in Klausmeier-Gray-Scott type models \cite{vanderStelt2013riseandfall}. On the other hand, subcritical bifurcation are found in a long-wavelength, small-amplitude limit of the von Hardenburg model \cite{dawes2016localised}; in  this regime, the model can be reduced to a nonlocal model. Subcritical bifurcations, along with homoclinic snaking, are also seen in a scalar nonlocal model \cite{ruiz2020patterns} for clonal plant growth. Interestingly, in a 3-component reaction-diffusion model for soil water, surface water, and biomass considered by Meron \cite{meron2018patterns}, removing just the positive biomass-water-uptake feedback causes bifurcations to change from supercritical to subcritical. Further investigation into the processes that support sub- versus super-critical bifurcations is warranted, since the distinction can lead to the prediction of physically different phenomena. 

\paragraph{Model formulation}
We have studied a simple but specific scalar nonlocal model. In doing so, a number of choices were made, including but not limited to: the choice to multiply the competition term $(1 - \frac{\K_c * u}{M})$ by the growth term as opposed to subtracting a separate death term; the choice of saturation function for the facilitation; and the fact that dispersal is modeled by diffusion. With the goal of systematic study, it would be interesting to study a more general model, say of the form 
\begin{equation}
    u_t = g(u;d) + s_f(u)\int \K_f(x-x')f(u(x'))dx' - s_c(u)\int \K_c(x-x')c(u(x'))dx' + k_du_{xx},
\end{equation}
where the function $g(u;d)$ encompasses all the local growth and death terms, the functions $s_f(u)$ and $s_c(u)$ represent density-dependent susceptibility to facilitation and competition, respectively, and $f(u)$ and $c(u)$ are density-dependent intensities of facilitation and competition. One might also replace the local diffusion term with nonlocal diffusion, and/or include terms such as $|u_x|^2$ which can arise in modeling clonal growth \cite{ruiz2020patterns}. Such a general analysis would allow direct comparison to models such as \cite{ruiz2020patterns}, in which subcritical bifurcations and snaking were observed, and potentially provide insight into which factors lead to super- or subcritical bifurcations and/or pattern resilience.

\paragraph{Facilitation and resilience.}
We focused mainly on the tipping case, with an interest in exploring the ``Turing before tipping" phenomenon. It is natural to ask what happens in the non-tipping case, where facilitation at low density is small. 
For hat-like kernels, the sub-case of no facilitation ($f_\max = 0$) has been particularly highlighted in the literature, since patterns can be seen for these kernels without facilitative effects \cite{martinez2013vegetation,martinez2014minimal,faye2015modulated,hamel2014nonlocal}. We note that removing facilitation from our model reduces it to a nonlocal Fisher-KPP equation. Notably, the results of Hamel and Rhyzik \cite{hamel2014nonlocal} imply that it is not possible to observe ``Turing before tipping," or more accurately, ``Turing before transcritical," when facilitation is removed from the model studied here. Setting $f_{\max} = 0$, equation \eqref{e:main} can be rescaled into the appropriate form from \cite{hamel2014nonlocal}. %
Then, Lemma 2.2 \cite{hamel2014nonlocal} implies that the two uniform states are the only bounded steady states that exist close to the transcritical bifurcation. Thus, there can be no branch of patterned states that originates before the transcritical bifurcation and continues past it.

One therefore wonders whether patterns emerge and persist beyond bifurcation of the uniform states when the facilitation at low vegetation density is nonzero but small. Although we did not include details of the non-tipping case, preliminary investigation suggests behavior that is similar to the no-facilitation case. The uniform state regains linear stability as $d$ approaches the tipping point, so that likewise, a Turing bifurcation is seen as $d$ is \emph{decreased}, rather than increased. From the kernels and parameters tested so far, this bifurcation also appears supercritical, which would be expected if only the uniform state is stable near the point of desertification. Numerical continuation also gives evidence that patterns do not persist as $d$ is increased past the transcritical bifurcation in the non-tipping case. Combined with prior findings \cite{martinez2013vegetation,martinez2014minimal,faye2015modulated,hamel2014nonlocal}, our results suggest that while facilitation may not be required for pattern onset for hat-like kernels, it may nevertheless play an important role in resilience. Likewise, more investigation is warranted.

\paragraph{Local vs. nonlocal facilitation}
 True tipping in the uniform dynamics requires some type of nonlinear positive feedback, or facilitation process. Here, we have studied the case of nonlocal facilitation, but one could also consider the case of pointwise, or local facilitation. This would correspond in our model to the limit $\sigma_f \to 0$, with $\sigma_c$ fixed. A general scalar model with nonlocal competition and local facilitation was recently considered in \cite{vanderVoort2026vegetation}, where it was shown that local facilitation can indeed lead to pattern formation, even for Gaussian-like competition kernels, which would not yield patterns in the case of only linear growth terms. It would be interesting to see what, if anything, changes qualitatively about pattern onset and pattern resilience, when facilitation is local as opposed to nonlocal.

\paragraph{Patterns in more than one spatial dimension}
We find the expected transition of one-dimensional patterns to higher-wavelengths as the environmental harshness parameter increases \cite{white1970brousse,DHerbs1997FonctionnementEG,vanderStelt2013riseandfall}-- at least in the case of Gaussian-like kernels. A natural question is  whether the sequence of stable patterns in two spatial dimensions also follows the usual motif of gaps giving way to stripes and eventually spots \cite{rietkerk2021evasion}. Such a transition was found in the non-tipping case of a PDE approximation of the Lefevre-Lejeune tiger bush model \cite{lejeune2004vegetation}. It would be interesting to see if this persists in the tipping case and in the full nonlocal model,
%when the Busse balloon extends past the tipping point, 
and especially to see if this transition depends on the choice of nonlocal competition kernel. 

\paragraph{Inferring kernel structure from biological factors.}
 Perhaps the most interesting question of all is whether there is biological data that could determine which spatial kernels are the most accurate. There is some work uses field data of the canopy and root structures of shrubs in southwest Niger to fit facilitative and competitive kernels \cite{Barbier2008}. However, the study only used exponential kernels. Additionally, experimental evidence has been used to determine the strength of inhibition at various distances in marsh tussocks \cite{koppel2006scale}. Another approach for deriving appropriate application specific kernels is via the use of a local model with a fast variable that acts as a resource to mimic the action of a kernel; see, for instance, the inclusion, and then, elimination, of a fast water variable in \cite{martinez2014minimal}. Understanding the dominant feedback mechanisms for dryland vegetation, and translating these to a spatial structure, is a necessary and extremely interesting question, and the authors welcome any biological insights.

\paragraph{Connection to multilevel selection and evolutionary competition.} Hermsen introduced the  model studied here in the context of spatial public goods dynamics, with the goal of understanding multilevel selection for the evolution of altruism \cite{hermsen2022emergent}. 
Mathematical models in that context explore evolutionary competition of group-structured populations in which competition within groups favors cheaters and competition between groups supports groups featuring altruistic individuals \cite{traulsen2006evolution,traulsen2008analytical,luo2014unifying,van2014simple,luo2017scaling}. These models of multilevel selection often presuppose the existence of group-structure in populations, formulating rules for individual-level population dynamics within groups and describing group-level fission, fusion, and extinction events that describe the formation or destruction of groups \cite{simon2010dynamical,simon2016group,simon2024fission,fog2026simulation}. By contrast, Hermsen wanted to use spatial pattern formation as a mechanism of for individuals to self-organize into clusters through a mix of short-range facilitation and long-range competition, with competition within and among clusters arising entirely as emergent behavior from the demographic dynamics of individuals of the population \cite{hermsen2022emergent,doekes2024multiscale}. 
From that perspective, our work contributes to the understanding of how facilitative mechanisms contribute to the patterns themselves and to the emergence of the group-like aggregates that serve as a new unit of selection for evolutionary competition and promoting the evolution of cooperative behavior. 

Our PDE model can also be thought of as a special case of models for the spatiotemporal evolution of altruistic behavior -- here, facilitation (altruism) was constant, but in the original agent-based formulation, altruism was a variable trait. Recent work in the evolutionary game theory literature has also explore nonlocal PDE models featuring game-theoretic interactions according to Hawk-Dove, Stag-Hunt, and Prisoners' Dilemma games mediated by an integral kernel \cite{hwang2013deterministic,aydogmus2017preservation,aydogmus2018discovering}, highlighting the emergence of spatial patterns and invasion fronts that arise due to interaction between competing strategies. This suggests that a natural direction for future work on the Hermsen-type model in the evolutionary games context is to incorporate the existing model of public good dynamics in populations featuring multiple levels of altruism, with the goal of exploring how facilitation, competition, and mutation can interact to promote strategic coexistence, pattern formation, and the persistence of altruism in the presence of cheater strategies. Such extensions of the PDE from the Hermsen model to incorporate multilevel levels of altruism or to consider nonlinear public goods games favoring coexistence of multiple strategies would create substantial opportunities to explore the role of spatial group formation in the emergence of cooperation through evolutionary branching \cite{doebeli2004evolutionary,killingback2010diversity}, pattern formation in evolutionary games \cite{wakano2009spatial,wakano2011pattern,deforest2013spatial,funk2019directed}, and the achievement of cooperation by multilevel selection \cite{boyd1990group,traulsen2006evolution,traulsen2008analytical,luo2014unifying,van2014simple,luo2017scaling}.

\section{Appendix} 

\subsection{Description of numerical methods.}

\textbf{Time evolution of solutions.} Numerical approximates of the time-dependent solutions $u = u(x,t)$ to equation~(\ref{e:nonlocal_veg}) are computed in MATLAB \cite{matlab}. We make the change of coordinate $y = \kappa x$ for $\kappa = 1/L$ and decompose the periodic spatial domain $\left[-1,1\right]$ into \texttt{n}  grid points with uniform spacing $\delta = 2/\texttt{n}$. Throughout, we set $L=100$. Spatial derivatives are approximated using fourth-order centered finite difference methods and convolutions are computed using Fast Fourier Transforms via the MATLAB \texttt{fft}/\texttt{ifft} functions. Approximations are evolved in time using an implicit-explicit (IMEX) Crank-Nicholson and Adams-Bashforth method \cite{ascher1995} that applies the implicit Crank-Nicholson method to the diffusion term and Adams-Bashforth to the nonlinear terms. To balance efficiency and accuracy, we use a uniform time step of $\delta t = 0.05$ (or less) and a minimum of \texttt{n}=1,000 grid points.

Initial profiles for the time evolution were small random or periodic perturbations $\eta(x)$ of a uniform positive state $u_0$. For values of $d< d_{\text{tip}}$, $u_0$ was taken to be the positive spatially homogeneous state. 

\textbf{Computation of equilibrium solutions.} Equilibrium solutions are numerically computed in the periodic co-moving frame $z = \kappa x - c t$ where $\kappa = 1/L$ and $c$ is the temporal frequency or wave speed. Thus, the desired periodic solutions $\bar{u}$ are roots of the equation
\begin{align} \label{e:matrix_root}
0 &= k_d \kappa^2 \partial_{zz} u + c \partial_{z} u + g u \left(1-c + \frac{f_\textrm{max}  \K_f * u}{f_\textrm{max}/f_0 +   \K_{f} * u}\right)\left( 1-\dfrac{\K_{c}*u}{M}\right) - du, \ \ z \in \left(-1, 1\right)&&\\
&u(-1) = u(1), \ \ u'(-1) = u'(1). \nonumber
\end{align}
Since the desired solutions are stationary in time, we expect that $c$ is zero. We note that the spatially homogeneous solutions satisfy the same equations and properties and are computed using the same methods. The change of variable is applied within the convolution terms; in a slight abuse of notation, we still denote the kernels by $\mathcal{K}_c$ and $\mathcal{K}_f$.

The spatial derivatives $\partial_{z}$ and $\partial_{zz}$ are approximated with fourth-order centered finite difference matrix approximations using \texttt{n} grid points with uniform spacing of $\delta = 2/\texttt{n}$. Convolutions are computed by applying the trapezoidal rule to the integrals. Specifically, we write the convolution operator as a $\texttt{n} \times \texttt{n}$ circulant matrix $M$, whose rows define the (shifted) kernel $\mathcal{K}$ on the grid $z \in [-1,1]$. Then, the convolution $\mathcal{K} * u$ is approximated numerically by the scalar-matrix-vector multiplication $\delta M u$ for grid spacing $\delta$. Periodic boundary conditions are directly incorporated into the finite difference matrices and the circulant structure of the convolution matrix. Solutions to the matrix problem (\ref{e:matrix_root}) are found using the root finding function \texttt{fsolve} in MATLAB. Initial guesses for the root-finder are generated from simulations of the PDE. 

Equation~(\ref{e:matrix_root}) is translationally invariant and thereby supports a family of periodic solutions. The standard traveling wave phase condition \cite{Allgower2003,Krauskopf2007}
\begin{align} \label{e:phase_condition}
\int_{-1}^1 \langle u'_{\text{old}}(z), u(z) - u_{\text{old}}(z) \rangle \, \rmd z
\end{align}
with reference solution $u_{\text{old}}$ is added to select a unique solution. The matrix equations~(\ref{e:matrix_root})--(\ref{e:phase_condition}) give $\texttt{nx}+1$ equations, hence we append the temporal frequency $c$ to give $\texttt{nx}+1$ unknowns. Throughout the computations, we track the computed value of $c$ and find that its values are consistently on the order of $1\texttt{e}-7$ or less.

\textbf{Stability.} To determine the spectral stability of a pattern on the finite and periodic domain $z \in\left[-1,1\right]$, we linearize equation~(\ref{e:matrix_root}) about the periodic wave solution $\bar{u}$ and consider the spectrum of the linear operator
\begin{align*}
    \mathcal{L}_{\textrm{pw}} v =& k_d \kappa^2 \partial_{zz} v + c \partial_{z} v + \tilde{g}_1(\bar{u})v+ \tilde{g}_2(\bar{u})\left[ \mathcal{K}_f * v\right] + \tilde{g}_3(\bar{u}) \left[ \mathcal{K}_c * v \right]
\end{align*}
on $L^2_{\text{per}}\left(-1,1 \right)$ with domain $H^2_{\text{per}}\left(-1,1\right)$. The periodic functions $\tilde{g}_1, \ \tilde{g}_2$, and $\tilde{g}_3$  are defined as
\begin{align*}
   \tilde{g}_1(\bar{u}) &=  g \left( 1 - c + \frac{f_\textrm{max} (\mathcal{K}_f * \bar{u})}{f_\textrm{max}/f_0 +  (\mathcal{K}_f * \bar{u}) } \right)\left( 1 - \frac{\mathcal{K}_c * \bar{u}}{M} \right) - d, \\
   \tilde{g}_2(\bar{u}) &= g \bar{u} \left( 1 - \frac{\mathcal{K}_c * \bar{u}}{M} \right) \left( \frac{  f^2_\textrm{max}/f_0}{\left(f_\textrm{max}/f_0 + ( \mathcal{K}_f * \bar{u})\right)^2} \right), \\
   \tilde{g}_3(\bar{u})  &= -\frac{g}{M} \bar{u} \left( 1 - c + \frac{f_\textrm{max}(\mathcal{K}_f * \bar{u})}{f_\textrm{max}/f_0 +  (\mathcal{K}_a * \bar{u})} \right).
\end{align*}
Again, the spatial derivatives are approximated by fourth-order centered finite difference matrices and matrix approximations are used for the the convolutions. Eigenvalues of the matrix approximation of $\mathcal{L}_{\text{pw}}$ are numerically computed in MATLAB using the $\texttt{eig}$ function. A solution is classified as unstable if there exists an eigenvalue with positive real part. 

\textbf{Numerical continuation.} Secant methods \cite{Allgower2003,Krauskopf2007} are used to numerically continue solutions in parameter $d$. During the continuation, equations~(\ref{e:matrix_root})--(\ref{e:phase_condition}) are repeatedly solved as $d$ is gradually increased or decreased and the $L_1$ and $L_{\infty}$ norms of the solutions are recorded and stability tested via the eigenvalues (Figures~\ref{fig:gaussian_max_pattern}--\ref{fig:periodic_L1}). The solution from the previous value of $d$ is used as the reference solution $u_{\text{old}}$. Using the described method, the number of peaks $N$ in the solution remains constant through the continuation in $d$. Therefore, each solution branch in Figures~\ref{fig:gaussian_max_pattern}--\ref{fig:periodic_L1} represents a continuation from a unique initial guess that is constructed with the desired value of $N$.

\textbf{Finite dimensional Busse Balloons} The shaded regions in Figures~\ref{fig:bouncing_BB}-\ref{fig:bouncing_BB_hats} mark stable solutions on the finite domain with $L=100$, as found by numerical continuation. The highest value of $N$ considered in the continuation corresponds to the patterned state that bifurcates from the spatially homogeneous solution. Therefore, it is possible that there exist stable solutions with higher numbers of peaks than represented in Figures~\ref{fig:bouncing_BB}-\ref{fig:bouncing_BB_hats}.

\subsection{Numerical evidence for a supercritical bifurcation.} We provide numerical evidence that upon increasing $d$ the periodic patterns bifurcate from the positive spatially homogeneous solution via a supercritical bifurcation. Define $\epsilon = (d-d_*)/d_*$ to be the relative bifurcation parameter where $d$ is the bifurcation parameter and $d_*$ is the bifurcation value. Figure~\ref{supfig:supercritical_evidence} shows that the amplitude as a function of the relative bifurcation parameter has slope 1/2 on a log-log axis, hence near pattern onset, the amplitude scales like the square-root of $\epsilon$. The data in Figure~\ref{supfig:supercritical_evidence} is generated from numerical continuation of the patterned state with $N=15$ using the Gaussian kernel and $d_*$ is set to be the value of $d$ where the numerical solution has zero amplitude.

\begin{figure}
    \centering
    \includegraphics[width=0.5\linewidth]{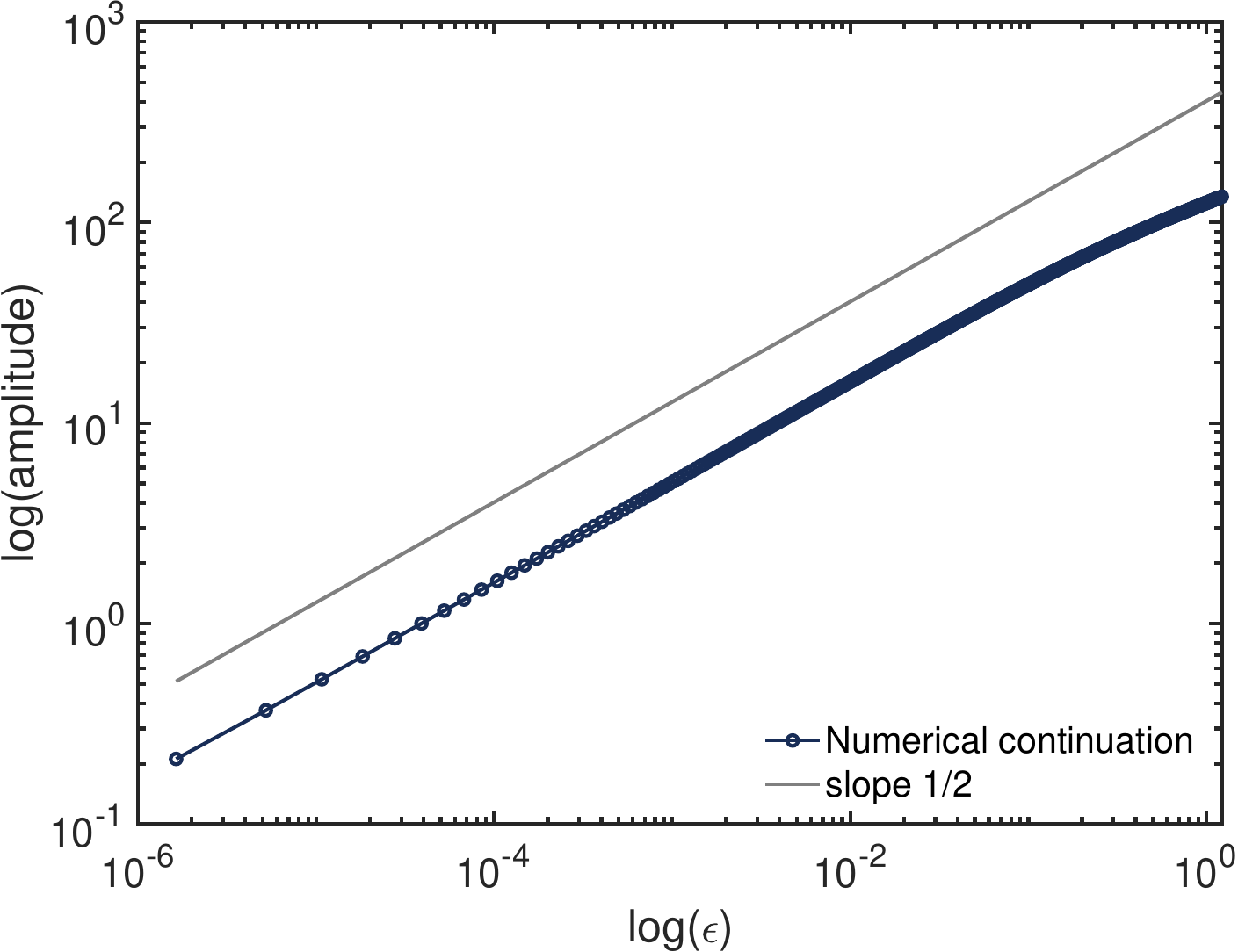}
    \caption{Numerical evidence that the patterned states emerge from the positive homogeneous solution via a supercritical bifurcation. The amplitude as a function of the relative bifurcation parameter $\epsilon = (d - d_*)/d_*$ has slope $1/2$ on a log-log axis. Solution computed using the Gaussian kernel. Pattern had $N=15$ spatial periods.}
    \label{supfig:supercritical_evidence}
\end{figure}

\subsection{Proofs of Propositions \ref{p:gauss_suff1}-\ref{p:hat}}

\begin{proof}[Proof (Proposition \ref{p:gauss_suff1})]
    Fix $0 < d < d_\textrm{tip}$. Consider the linearization $\wh \L$ in Fourier space: 
    \begin{align*}
    \wh\L(\omega) &= \frac{gu_*f_\textrm{max}^2(1-\frac{u_*}{M})}{f_0(f_\textrm{max}/f_0 +  u_* )^2 }\wh\K_f(\omega) - gu_*\left(\frac{1-c}{M}+\frac{f_\textrm{max}  u_* }{M(f_\textrm{max}/f_0 +  u_* ) }\right)\wh\K_c(\omega) -k_d\omega^2\\
    &=: C_f\wh\K_f(\omega) - C_c\wh\K_c(\omega) - k_d\omega^2.
    \end{align*}
    We begin by showing that for $\sigma_c$ sufficiently small, $\wh \L_{\sigma_c}(\omega)<0$ for all $\omega$. Informally, this can be seen because the pointwise limit of $-C_c\wh\K_c(\omega)$ is $-C_c$ for all $\omega$ as $\sigma_c \to 0$. 

Let $\omega_0$ be large enough that $C_f < k_d\omega_0^2$. Then $\wh\L_{\sigma_c}(\omega) < 0$ for $|\omega| \ge \omega_0$, independent of $\sigma_c$. Next, recall that $\sigma_c$ scales $\K_c$ through $\K_c(\cdot) = \frac{1}{\sigma_c}\wt\K_c(\frac{\cdot}{\sigma_c})$, so that the Fourier transform is given by $\wh\K_c(\omega) = \wh{\wt{K}_c}(\sigma_c\omega)$. Then since $\wh{\wt{K}_c}$ is continuous and $C_f - C_c<0$, there exists $\delta > 0$ so that $C_f - C_c\wh{\wt{K}_c}(\omega)<0$ for $|\omega| < \delta$. Taking $\sigma_c$ small enough that $\delta\sigma_c < \omega_0$, we will have 
\[
\wh\L_{\sigma_c} (\omega) \le C_f \wh\K_f (\omega) - C_c\wh\K_c(\omega) \le C_f - C_c\wh{\wt{K}_c}(\sigma_c\omega) < 0, \qquad \omega < \omega_0.
\]
Combining this with the fact that $\wh\L_{\sigma_c}(\omega) < 0$ for $|\omega| \ge \omega_0$, we get that $\wh\L_{\sigma_c}(\omega) < 0$ for all $\omega$. Therefore, if $\sigma_c$ is sufficiently small, the uniform state will be linearly stable. 

For the second statement, note that if $\sigma_c = 1$, then $\K_f = \wt\K_f$ and $\K_c = \wt\K_c$. Then, if $\wh{\wt{\K}_f}(\omega) \le \wh{\wt{\K}_c}(\omega)$, we have

\[
\wh\L_{\sigma_c=1}(\omega) = C_f\wh\K_f(\omega) - C_c\wh\K_c(\omega) - k_d\omega^2 \le (C_f-C_c)\wh\K_c(\omega) - k_d\omega^2 <0.
\]

      Lastly, to see the third statement, we have that the quantity $C_f\wh\K_f(\omega) - k_d\omega^2$ is positive at $\omega = 0$, so there exists $\omega_0 >0$ so that $C_f\wh\K_f(\omega_0) - k_d\omega_0^2 =: \ep > 0$. On the other hand, recall that $\sigma_c$ scales $\K_c$ through $\K_c(\cdot) = \frac{1}{\sigma_c}\wt\K_c(\frac{\cdot}{\sigma_c})$, so that the Fourier transform is given by $\wh\K_c(\omega) = \wh{\wt{K}_c}(\sigma_c\omega)$. Also, $\wh{\wt{K}_c}(\cdot)$ is continuous with limit 0 at infinity, so there exists $\sigma_c$ large enough that $C_c\wh\K_c(\omega_*) = C_c\wh{\wt{K}_c}(\sigma_c\omega_*)< \frac{\ep}{2}$. Then $\wh\L_{\sigma_c}(\omega_*) > 0$, as desired. \end{proof}

    \begin{proof}[Proof (Proposition \ref{p:gauss_suff2})]
   Fix $\sigma_c > 1$. To see the first statement, notice that at $d = 0$, we have that $u_* = M$, so the linearization reduces to 
\[
\wh \L(\omega) = -C_c \wh\K_c(\omega) - k_d\omega^2,
\]
with $C_c > 0$. By assumption, $\wh\K_c(\omega) \ge 0$, with $\wh\K_c(0) = 1$, so $\wh \L(\omega) <0$ for all $\omega$. 
   
   We move on to the second statement. Since $\sigma_c > \sigma_f$, there must be some $\omega_* >0$ for which $ \wh\K_f(\omega_*)  > \wh\K_c(\omega_*)$. If $\K_c, \K_f$ are the same function up to scaling by $\sigma_c$, this is immediate. If they are not, then the assumption of finite 2nd moments guarantees that the kernels can be Taylor expanded to 2nd order in Fourier space:

   \begin{align*}
       \wh\K_f(\omega) &= 1 - \frac{\omega^2}{2} + o(\omega^2), \qquad 
       \wh\K_c(\omega) = 1 - \frac{\sigma_c^2\omega^2}{2} + o(\omega^2).
   \end{align*}
   Since $\sigma_c > 1$, then there is some $\omega$ where $\wh\K_f(\omega) >  \wh\K_c(\omega) $. 
   
   Now, consider again the linearization $\wh \L$ in Fourier space. The linearization depends continuously on $u_*,$ which in turn depends continuously on $d$ for $d \in [0, d_\tip]$. 
   %Since $\sigma_c > \sigma_f$, then there must be some $\omega_* >0$ for which $\wh\K_f(\omega_*)  > \wh\K_c(\omega_*)$.
  When $u_*$ is exactly at the tipping point $ u_\tip = \frac{f_\max}{f_0}\left(\sqrt{\frac{f_\max + f_0 M}{f_\max + 1 - c}}-1\right)$, one can calculate that $C_f = C_c = C >0$. Then we must have 
    \[
      \wh\L(\omega_*) + k_d \omega_*^2 = C(\wh\K_f(\omega_*)-\wh\K_c(\omega_*)) > 0.
    \]
    Since the right hand side depends continuously on $d$, we can then find $d$ sufficiently close to the tipping point so that $\wh\L(\omega_*)+ k_d \omega_*^2 >0$.
   It remains to take $k_d$ small enough, and then $\wh\L(\omega_*) > 0$, as desired. 
\end{proof}

\begin{proof}[Proof (Proposition \ref{p:hat_suff1})] 
The proof of the first statement follows identically to the first statement in Proposition \ref{p:gauss_suff1}, except that the condition on $\omega_0$ changes to $\sup_\omega(-C_c\wh\K_c(\omega)) + \sup_\omega C_f \wh\K_f(\omega) < k_d\omega_0^2$.

The proof of the second statement is identical to the proof of the last statement in Proposition \ref{p:gauss_suff1}. 
\end{proof}

\begin{proof}[Proof (Proposition \ref{p:hat})] At $d = 0$, we have that $u_* = M$, so the linearization reduces to 
\[
\wh \L(\omega) = -C_c \wh\K_c(\omega) - k_d\omega^2,
\]
with $C_c > 0$. By assumption, $\inf_\omega \wh\K_c(\omega) < 0$, and $\wh\K_c(0) = 1$, so there must be some $\omega_*>0$ such that $\wh\K_c(\omega_*)<0.$ Then for $k_d$ small enough, $\wh\L(\omega_*) > 0$, as desired. 
\end{proof}

\section*{Declarations}

\textbf{Competing Interests:} The authors have no competing interests to declare that are relevant to the content of this article

\textbf{Data Availability:} Data sharing is not applicable to this article as no new data were created or analyzed in this study

\bibliographystyle{unsrt}
\bibliography{references}

\end{document}